\numberwithin{equation}{section}
\newtheorem{lemma}[equation]{Lemma}
\newtheorem{prop}[equation]{Proposition}
\newtheorem{cor}[equation]{Corollary}
\newtheorem{claim*}{Claim}
\newtheorem{thm}[equation]{Theorem}
\theoremstyle{definition}
\newtheorem{rmk}[equation]{Remark}
\newenvironment{remark}[1][]{\begin{rmk}[#1] \pushQED{\qed}}{\popQED \end{rmk}}
\newtheorem{eg}[equation]{Example}
\newenvironment{example}[1][]{\begin{eg}[#1] \pushQED{\qed}}{\popQED \end{eg}}
\newtheorem{defn-placeholder}[equation]{Definition}
\newenvironment{defn}[1][]{\begin{defn-placeholder}[#1]\pushQED{\qed}}{\popQED \end{defn-placeholder}}
\newcommand{\Spec}{\operatorname{Spec}}
\newcommand{\coker}{\operatorname{coker}}
\newcommand{\Hom}{\operatorname{Hom}} %done
\newcommand{\Ext}{\operatorname{Ext}} %done
\newcommand{\rank}{\operatorname{rank}}
\newcommand{\Sym}{\operatorname{Sym}} %done
\newcommand{\GL}{\mathbf{GL}}
\newcommand{\defi}[1]{{\bf \textsf{#1}}} % for defined terms
\newcommand{\DD}{\mathrm D}
\newcommand{\PP}{\mathbb P}
\renewcommand{\AA}{\mathbb A}
\newcommand{\QQ}{\mathbb Q}
\newcommand{\ZZ}{\mathbb Z}
\newcommand{\bd}{\mathbf d}
\newcommand{\be}{\mathbf e}
\newcommand{\bE}{\mathbf E}
\newcommand{\bF}{\mathbf F}
\newcommand{\bk}{\mathbf{k}}
\newcommand{\bS}{\mathbf{S}}
\newcommand{\rE}{\mathrm{E}}
\newcommand{\rH}{\mathrm{H}}
\newcommand{\rh}{\mathrm{h}}
\newcommand{\rK}{\mathrm{K}}
\newcommand{\rR}{\mathrm R}
\newcommand{\cA}{\mathcal{A}}
\newcommand{\cE}{\mathcal E}
\newcommand{\cF}{\mathcal F}
\newcommand{\cG}{\mathcal G}
\newcommand{\cO}{\mathcal O}
\newcommand{\cQ}{\mathcal{Q}}
\newcommand{\cU}{\mathcal U}
\newcommand{\cW}{\mathcal W}
\newcommand{\fp}{\mathfrak{p}}
\newcommand{\fgl}{\mathfrak{gl}}
\def\reg{\operatorname{reg}}
\newcommand{\arxiv}[1]{\href{http://arxiv.org/abs/#1}{{\tt arXiv:#1}}}
\title{Supernatural analogues of Beilinson monads}
\author{Daniel Erman}
\address{Department of Mathematics, University of Wisconsin, Madison, WI}
\email{\href{mailto:derman@math.wisc.edu}{derman@math.wisc.edu}}
\urladdr{\url{http://math.wisc.edu/~derman/}}
\author{Steven V Sam}
\address{Department of Mathematics, University of California, Berkeley, CA}
\curraddr{Department of Mathematics, University of Wisconsin, Madison, WI}
\email{\href{mailto:svs@math.wisc.edu}{svs@math.wisc.edu}}
\urladdr{\url{http://math.wisc.edu/~svs/}}
\thanks{DE was partially supported by NSF grant DMS-1302057. SS was supported by a Miller research fellowship.}
\date{October 17, 2016}
\keywords{Sheaf cohomology, monads, Boij--S\"oderberg theory}
\subjclass[2010]{%
14F05, %   	Sheaves, derived categories of sheaves and related constructions
13D02.%   	Syzygies, resolutions, complexes
}
\begin{document}

\maketitle

\begin{abstract}
We use supernatural bundles to build $\GL$-equivariant resolutions supported on the diagonal of $\PP^n\times \PP^n$, in a way that extends Beilinson's resolution of the diagonal.  We thus obtain results about supernatural bundles that largely parallel known results about exceptional collections.  We apply this construction to Boij--S\"oderberg decompositions of cohomology tables of vector bundles, yielding a proof of concept for the idea that those positive rational decompositions should admit meaningful categorifications.  
\end{abstract}

% Plaintext abstract: We use supernatural bundles to build GL-equivariant resolutions supported on the diagonal of P^n x P^n, in a way that extends Beilinson's resolution of the diagonal.  We thus obtain results about supernatural bundles that largely parallel known results about exceptional collections.  We apply this construction to Boij-S\"oderberg decompositions of cohomology tables of vector bundles, yielding a proof of concept for the idea that those positive rational decompositions should admit meaningful categorifications.  

\section{Introduction}

Throughout we work over a field $\bk$ of characteristic zero.  In this paper, we use supernatural bundles to produce $\GL$-equivariant resolutions supported along the diagonal of $\PP^n\times \PP^n$, in a way that extends Beilinson's resolution of the diagonal.  We thus obtain results about supernatural bundles that largely parallel known results about exceptional collections, including analogues of Beilinson monads.

We then apply these resolutions to the study of Boij--S\"oderberg decompositions of vector bundles.  
Boij--S\"oderberg theory originated in~\cite{boij-sod-1} with a complete conjectural classification (up to scalar multiple) of Betti tables of graded free resolutions of finite length modules over a polynomial ring.
The conjectures were proven by Eisenbud and Schreyer~\cite{ES2008}, who introduced a dual side to the theory that completely classified (up to scalar multiple) the cohomology tables of vector bundles on $\PP^n$.  In particular, \cite[Theorem~0.5]{ES2008} shows that the cohomology table of any vector bundle decomposes as a positive rational linear combination of the cohomology tables of supernatural vector bundles.  See~\cite{floystad, schreyer-eisenbud-icm} for an introduction to the theory, and Example~\ref{eg:BS-decomp} for an example of a Boij--S\"oderberg decomposition.

Perhaps the most mysterious question about Boij--S\"oderberg theory is whether these numerical decompositions, which very often involve rational coefficients, admit any sort of categorification to the level of vector bundles.  We provide an affirmative answer in many cases, showing that a Fourier--Mukai transform with respect to one of our equivariant resolutions naturally clears the denominators and categorifies many of these decompositions.

\subsection{Supernatural bundles as a parallel for exceptional bundles}
A vector bundle $\cF$ on $\PP^n$ is \defi{exceptional} if $\Ext^*(\cF,\cF)$ is as small as possible, i.e. if
\[
\dim \Ext^0(\cF,\cF)=1 \text{ and } \dim \Ext^i(\cF,\cF)=0 \text{ for } i>0.
\]
A vector bundle $\cF$ on $\PP^n$ is \defi{supernatural} if it has as little cohomology as possible, i.e. if for each $j\in \ZZ$ there is at most one $i$ such that $\rH^i(\PP^n,\cF(j))\ne 0$, and if the Hilbert polynomial of $\cF$ has $n$ distinct integral roots.  

Supernatural bundles were first defined in~\cite[p.~862]{ES2008} (a closely related definition appeared in~\cite[p.365]{hartshorne-hirschowitz}) where they played a key role in the main results of Boij--S\"oderberg theory, providing the extremal rays of the cone of cohomology tables. In characteristic $0$, many supernatural bundles are familiar objects: consider the tautological exact sequence of vector bundles on $\PP^n$
\[
0\to \cO(-1)\to \cO^{n+1} \to \mathcal Q \to 0;
\]
applying a Schur functor $\bS_{\lambda}$ to $\cQ^*$ gives a supernatural bundle.  Moreover, by varying $\lambda$ and by twisting by line bundles, one gets a supernatural bundle corresponding to each root sequence (see \S\ref{sec:background} for detailed definitions of root sequences).  However, we emphasize that this does not account for all known supernatural bundles.  By pushing forward line bundles from a product of projective spaces, one can construct supernatural bundles in any characteristic, and even in characteristic $0$ these bundles are generally distinct from the $\bS_{\lambda}\cQ^*$~\cite[\S6]{ES2008}; moreover, many supernatural bundles admit nontrivial moduli \cite[\S6]{ES-river}.

Although most supernatural bundles fail to be exceptional, there are some immediate commonalities with exceptional vector bundles.  To begin with, the basic examples of exceptional collections, namely $\{\cO(-i)\}_{i=0}^n$ and $\{\Omega^i(i)\}_{i=0}^n$, consist entirely of supernatural sheaves.
In addition, a chain of root sequences (see \S\ref{sec:background}) determines a sequence of supernatural bundles $\dots, \cE_i, \cE_{i+1}, \dots$ where $\Hom(\cE_i,\cE_j)\ne 0 \iff i\geq j$~\cite[Theorem~1.2]{beks-poset}. 

The following theorem parallels known results about exceptional sequences~{\cite{beilinson, BGG, GR-mutations}}.  We write $\rK_0(\PP^n)_{\QQ}:=\rK_0(\PP^n) \otimes_{\ZZ}\QQ$.

\begin{thm} \label{thm:main comparison}
Let $W = \{\cO(-w_0), \cO(-w_1), \dots, \cO(-w_n)\}$ be any collection of line bundles with $w_0<w_1<\dots < w_n$. Define $\mu(W)_i = w_n - w_{i-1} - (n-i+1)$ and 
\[
N_W := \dim \bS_{\mu(W)}(\bk^{n+1}) = \det \left( \binom{w_n - w_{i-1} + j - 1}{n}\right)_{i,j=1}^{n+1}.
\]
$($The inputs in the determinant are binomial coefficients; see \cite[Exercise A.30(iii)]{fulton-harris} for this dimension formula.$)$
\begin{enumerate}[\indent \rm (1)]
\item\label{QQbasis}  {\bf $\QQ$-basis}: $W$ is a basis for $\rK_0(\PP^n)_{\QQ}$; more precisely, $W$ spans a subgroup of index $N_W$ in $\rK_0(\PP^n)$.

\item\label{Orthogonal}  {\bf Orthogonal basis}: There is a second collection $W^\perp = \{\cE_0,\cE_1, \dots, \cE_n\}$, also spanning a subgroup of index $N_W$ in $\rK_0(\PP^n)$, which is orthogonal to $W$ in the following sense:
\[
\rH^i(\PP^n, \cE_k \otimes \cO(-w_j))= 
\begin{cases}
\bk^{N_W} & \text{if } i=j=k\\
0& \text{else}
\end{cases}.
\]
The objects of $W^\perp$ are $\GL$-equivariant supernatural bundles defined in \eqref{eqn:Wperp-defn}.

\item\label{Resolution}  {\bf Resolution of sheaf on diagonal:}  $W$ and $W^\perp$ can be combined on $\PP^n\times \PP^n$ to give a $\GL$-equivariant resolution
\[
\mathbf{E}_W:= [ \cO(-w_0)\boxtimes \cE_0\gets \cO(-w_1)\boxtimes \cE_1 \gets \cdots \gets  \cO(-w_n)\boxtimes \cE_n \gets 0]
\]
of a sheaf $\cU_W$ set-theoretically supported on $\Delta$ $($the diagonal copy of $\PP^n$ in $\PP^n\times \PP^n)$. The pushforward of $\cU_W$ to either copy of $\PP^n$ is a vector bundle of rank $N_W$.
\end{enumerate}
\end{thm}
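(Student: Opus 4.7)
For part (1), I would use Beilinson's orthogonality $\chi(\Omega^k(k), \cO(-j)) = \delta_{jk}$ for $0 \le j,k \le n$ to expand each $[\cO(-w_i)]$ in the standard $\ZZ$-basis $\{[\cO(-k)]\}_{k=0}^n$ of $\rK_0(\PP^n)$: the $(i,k)$-coefficient is $\chi(\Omega^k(k), \cO(-w_i))$, computable by Riemann--Roch as an explicit binomial coefficient in $w_i$. Suitable row and column operations should reduce the change-of-basis determinant to $\det\bigl(\binom{w_n - w_{i-1}+j-1}{n}\bigr)_{i,j=1}^{n+1}$, which by the Jacobi--Trudi identity equals $\dim \bS_{\mu(W)}(\bk^{n+1}) = N_W$. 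This yields both the $\QQ$-basis property and the subgroup-index statement.

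For part (2), I would compute $\rH^*(\PP^n, \cE_k \otimes \cO(-w_j))$ via Borel--Weil--Bott applied to the $\GL$-equivariant bundle $\cE_k$ from \eqref{eqn:Wperp-defn}. The definition should be arranged so that, for $j \ne k$, the weight fed to Bott's algorithm becomes non-regular (has a repeated coordinate after $\rho$-shift), forcing cohomology to vanish; while for $j = k$ the weight is regular, exactly $k$ transpositions dominantize it, and the resulting $\GL(V)$-irrep is $\bS_{\mu(W)}(\bk^{n+1})$, of dimension $N_W$ --- precisely the orthogonality claimed.

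For part (3), I would construct the differentials via $\GL$-equivariance: the space of equivariant maps $\cO(-w_i) \boxtimes \cE_i \to \cO(-w_{i-1}) \boxtimes \cE_{i-1}$ sits inside $\Sym^{w_i - w_{i-1}} V^* \otimes \Hom(\cE_i, \cE_{i-1})$ and should be one-dimensional by Pieri applied to the Schur-functor presentations of the $\cE_i$; I would fix compatible scalars so that $d^2 = 0$, in parallel with Beilinson's Koszul differential. To show the resulting complex resolves a sheaf $\cU_W$ set-theoretically supported on $\Delta$ with pushforward of rank $N_W$, I would combine two ingredients. First, applying $R\pi_{1*}(-\otimes \pi_2^* \cO(-w_k))$ termwise: by the orthogonality from part (2) the K\"unneth spectral sequence collapses so that only the $j=k$ term contributes, yielding $\cO(-w_k)^{N_W}$ concentrated in cohomological degree zero for every $k$. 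Since $\{\cO(-w_k)\}$ generates $\rK_0(\PP^n)_\QQ$ by part (1), this pins down the $\rK_0$-class of $\pi_{1*}\mathbf{E}_W$ and forces the rank-$N_W$ statement. Second, the cohomology sheaves $\rH^i(\mathbf{E}_W)$ are $\GL(V)$-equivariant on $\PP^n\times\PP^n$, whose only $\GL$-orbits are $\Delta$ and its open complement, so it suffices to verify fiberwise exactness at a single off-diagonal point. The main obstacle is this last step, which I would handle by identifying the fiber complex with a Koszul-type complex associated to a regular section of an equivariant rank-$n$ bundle cutting out $\Delta$ locally, with exactness falling out of a direct $\GL$-equivariant computation of the fibers of the Schur-functor bundles appearing in $\cE_k$.
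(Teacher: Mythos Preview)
Your approaches to parts (1) and (2) are sound. For (2) you use Borel--Weil--Bott exactly as the paper does. For (1) you take a genuinely different route: you compute the change-of-basis determinant from $\{[\cO(-w_i)]\}$ to the standard basis directly and identify it with $N_W$ via Jacobi--Trudi, whereas the paper first establishes the orthogonality in (2) and then uses the Euler pairing $\langle [\cE_j^*],[\cO(-w_i)]\rangle = (-1)^j N_W \delta_{ij}$ to deduce both linear independence and that $N_W\cdot\rK_0(\PP^n)$ lies in the $\ZZ$-span of $W$. Your approach is more self-contained but requires an honest determinant manipulation; the paper's is slicker once (2) is in hand.

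The genuine gap is in part (3). What you are constructing via Pieri maps is precisely the Eisenbud--Fl{\o}ystad--Weyman pure complex of type $\mathbf{w}=(w_0,\dots,w_n)$, globalized over the rank-$n$ bundle $\cE=\cO(-1)\boxtimes\cQ^*$ and then specialized along the section $\Sym(\cE)\to\cO_X$ coming from the identity in $\mathrm{End}(V)\cong\rH^0(\cE^*)$. The paper invokes this explicitly: the local model \eqref{eqn:EFW-local} is already known to be an acyclic resolution of a finite-length Cohen--Macaulay module, and acyclicity after specialization then follows from the Eagon--Northcott generic perfection theorem, since the diagonal has the correct codimension $n$. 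Your proposed fiberwise check at an off-diagonal point is in fact equivalent to this: the fiber complex at $(p,q)$ with $p\ne q$ is the EFW complex over $\Sym(\cQ^*_q)$ evaluated at the nonzero point of $\cQ_q$ given by the image of the line $p$, and its exactness there is exactly the statement that the EFW complex resolves a module supported at the origin. But this is a nontrivial theorem, not a ``Koszul-type'' computation --- the complex is a genuine Koszul complex only when $w_i=i$, and for general $W$ the terms involve higher Schur functors $\bS_{\lambda(\mathbf{w})^j}$ rather than exterior powers. The same remark applies to $d^2=0$: the existence of compatible scalars is part of the EFW package, not something you can simply ``fix''. So your outline becomes a proof once you cite \cite{EFW}; without it, the fiberwise step is the whole difficulty repackaged.
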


On the one hand, this is a weaker result than the parallel result for exceptional sequences.  For instance, any full exceptional collection forms a $\ZZ$-basis of $\rK_0(\PP^n)$, and can be used to resolve $\cO_{\Delta}$ in the derived category (see, for example, \cite[\S 3]{canonaco}). Performing a Fourier--Mukai transform with respect to our resolution thus has a cost that is not present when working with Beilinson's resolution: whereas a Fourier--Mukai transform with respect to a resolution of $\cO_{\Delta}$ is the identity in the derived category, a Fourier--Mukai transform with respect to an appropriate resolution of a higher rank sheaf can introduce a ``scalar multiple''.

On the other hand, in contrast with the bundles that arise in exceptional sequences via mutations, we work entirely with simple, familiar bundles:  line bundles on the $W$ side, and equivariant bundles of the form $\bS_{\lambda} \mathcal Q^*$ on the $W^\perp$ side.

The simplest example of Theorem~\ref{thm:main comparison} is when $W=\{\cO(-i)\}_{i=0}^n$; then $W^\perp = \{\Omega^i(i)\}_{i=0}^n$ and $\bE_W$ is Beilinson's resolution of the diagonal~\cite{beilinson}.
The following is one of the next simplest examples.

\begin{example}\label{ex:023}
On $\PP^2$, let $W=\{\cO,\cO(-2),\cO(-3)\}$.  In this case $W^\perp =\{\cE_0,\cE_1,\cE_2\}$ where $\cE_0=\cO(1), \cE_1 = (\Sym^2 \cQ^*)(1)$ and $\cE_2 = \cQ^*=\Omega^1(1)$.  We then have:
\[
\bE_W = [\cO\boxtimes \cE_0 \gets \cO(-2)\boxtimes \cE_1\gets \cO(-3)\boxtimes \cE_2\gets 0],
\]
which resolves a sheaf $\cU_W$ set-theoretically supported on the diagonal copy of $\PP^2$ in $\PP^2\times \PP^2$.  In fact, if $\mathcal I_\Delta$ is the ideal sheaf of the diagonal, then $\cU_W=\cO_{\PP^2\times\PP^2}/\mathcal I_\Delta^2$, and the pushforward of $\cU_W$ to either factor is a rank $3$ bundle on $\PP^2$.
\end{example}

The following Fourier--Mukai transforms play an essential role in many of our applications.  Let $\DD^b(\PP^n)$ be the bounded derived category of coherent sheaves on $\PP^n$.  Following~\cite[Remark~5.2]{huybrechts}, we do not require a Fourier--Mukai transform to induce a derived equivalence.

\begin{defn}
For any $n$ and any $W$, we define the Fourier--Mukai transforms $\Phi^W_i \colon \DD^b(\PP^n)\to \DD^b(\PP^n)$ for $i=1,2$ via:
\[
\Phi^W_1(\cF) = \rR p_{1*}\left( p_2^*\cF \otimes \mathbf{E}_W\right) \qquad \text{ and } \qquad \Phi^W_2(\cF') = \rR p_{2*}\left( p_1^*\cF' \otimes \mathbf{E}_W\right). \qedhere
\]
\end{defn}

Recall that in \cite{beilinson}, Beilinson uses his resolution of the diagonal to construct two monads for an arbitrary sheaf; the first monad (sometimes referred to as {\em the Beilinson monad}) involves the sheaves $\{\Omega^i(i)\}_{i=0}^n$ and the second monad involves the line bundles $\{\cO(-i)\}_{i=0}^n$; see also~\cite{ancona-ottaviani, EFS} for additional details.   Our first application of Theorem~\ref{thm:main comparison} yields two spectral sequences which provide analogues of these monads for an arbitrary $W$.  We write $[\cF]$ for the class of a sheaf $\cF$ in the Grothendieck group $\rK_0(\PP^n)$.

\begin{cor}\label{cor:categorification}
Keep the notation of Theorem~\ref{thm:main comparison}.  Let $\cF$ be a coherent sheaf on $\PP^n$.
\begin{enumerate}[\indent \rm (1)]
\item   Let $i=1$ or $2$. Then $\Phi^W_i \colon \rK_0(\PP^n)\to \rK_0(\PP^n)$ is multiplication by $N_W$; in particular
\[
[\Phi^W_i(\cF)] = N_W \cdot [\cF].
\]

\item  The expressions for $[\cF]$ in terms of the bases $W$, $W^\perp$ for $\rK_0(\PP^n)_{\QQ}$ are given by:
  \begin{enumerate}[\rm (a)]
  \item $[\cF] = \frac{1}{N_W} \sum_{j=0}^n (-1)^j \chi(\PP^n,\cF\otimes \cE_j) \cdot [\cO(-w_j)],$ and
  \item  $[\cF] = \frac{1}{N_W}\sum_{j=0}^n (-1)^j \chi(\PP^n,\cF(-w_j)) \cdot [\cE_j].$
\end{enumerate}

\item There are spectral sequences for computing $\Phi^W_i(\cF)$ that categorify each of these expressions.  Namely, there are spectral sequences $\rE^1_{p,q}$ and $\widehat{\rE}^1_{p,q}$ where:
\begin{enumerate}[\rm (a)]
\item $\rE^1_{p,q}= \cO(-w_q)\otimes \rH^{-p}(\PP^n, \cE_q\otimes \cF) \Longrightarrow \Phi^W_1(\cF)$, and 
\item $\widehat{\rE}^1_{p,q} = \rH^{-p}(\PP^n, \cF(-w_q)) \otimes \cE_q \Longrightarrow \Phi^W_2(\cF)$.
\end{enumerate}
\end{enumerate}
\end{cor}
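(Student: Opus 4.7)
The plan is to read everything off from the resolution $\mathbf{E}_W$ of $\cU_W$ together with the orthogonality relations in Theorem~\ref{thm:main comparison}. The key preliminary computation, used throughout, is that by the projection formula together with $\rR p_{1*}p_2^*\cG \simeq \rR\Gamma(\PP^n,\cG)\otimes\cO_{\PP^n}$,
\[
\rR p_{1*}\bigl(p_2^*\cF \otimes (\cO(-w_j)\boxtimes \cE_j)\bigr) \simeq \cO(-w_j)\otimes \rR\Gamma(\PP^n,\cF\otimes\cE_j),
\]
and similarly after swapping $p_1$ and $p_2$.

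For parts (1) and (2), I would first pass to $\rK_0$. Since $\mathbf{E}_W$ resolves $\cU_W$, taking an alternating sum of the displayed identity yields
\[
[\Phi^W_1(\cF)] = \sum_{j=0}^n (-1)^j \chi(\PP^n,\cF\otimes\cE_j)\cdot[\cO(-w_j)],
\]
and analogously for $\Phi^W_2$. To prove (2a), write $[\cF] = \sum_j a_j[\cO(-w_j)]$, which is possible by Theorem~\ref{thm:main comparison}(\ref{QQbasis}). Pairing with $\cE_k$ via $\chi$ and invoking the orthogonality in Theorem~\ref{thm:main comparison}(\ref{Orthogonal}), $\chi(\PP^n,\cE_k\otimes\cO(-w_j))=(-1)^k N_W \delta_{jk}$, gives $a_k = (-1)^k \chi(\PP^n,\cF\otimes\cE_k)/N_W$. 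Part (2b) is identical with the roles of $W$ and $W^\perp$ exchanged. Comparing (2a) with the formula above for $[\Phi^W_1(\cF)]$ yields (1) in the case $i=1$, and (2b) handles $i=2$ similarly.

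For the spectral sequences in (3), I would apply the standard hypercohomology spectral sequence for $\rR p_{1*}$ applied to the bounded complex $p_2^*\cF\otimes\mathbf{E}_W$. Filtering by the resolution direction produces an $E_1$ page whose $(p,q)$ entry is $R^{-p}p_{1*}$ of the term $p_2^*\cF\otimes(\cO(-w_q)\boxtimes\cE_q)$; the preliminary computation rewrites this as $\cO(-w_q)\otimes \rH^{-p}(\PP^n,\cF\otimes\cE_q)$, matching (3a), and the abutment is $\Phi^W_1(\cF)$ by definition of the Fourier--Mukai transform. The spectral sequence in (3b) is built identically after swapping the two projections. The only mild obstacle I anticipate is matching the homological-versus-cohomological indexing of the hypercohomology spectral sequence to the form stated; the substantive content all lives in Theorem~\ref{thm:main comparison}, and what remains is formal.
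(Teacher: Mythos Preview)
Your argument is correct and uses exactly the same ingredients as the paper: the resolution $\mathbf{E}_W$, the orthogonality relations of Theorem~\ref{thm:main comparison}(\ref{Orthogonal}), and the hypercohomology spectral sequence for $\rR p_{i*}$. The only difference is the order in which (1) and (2) are obtained. The paper first proves (1) on the basis elements---computing $\Phi^W_1(\cO(-w_i))\cong\cO(-w_i)^{N_W}$ via the Ulrich property of $\cU_W$ (Proposition~\ref{prop:ulrich}) and $\Phi^W_2(\cE_i)\cong\cE_i^{N_W}$ directly from the spectral sequence---and then deduces (2) by equating $N_W[\cF]$ with the alternating sum coming from the $\rE^1$-page. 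You instead prove (2) first, reading the coefficients $a_k$ off from the orthogonality pairing, and then obtain (1) by matching (2) against the same alternating-sum formula. Your route is marginally more economical since it does not require isolating the Ulrich statement, while the paper's route has the advantage of making the stronger sheaf-level identities $\Phi^W_1(\cO(-w_i))\cong\cO(-w_i)^{N_W}$ and $\Phi^W_2(\cE_i)\cong\cE_i^{N_W}$ explicit along the way.
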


We also obtain an analogue of the existence of linear resolutions.  A $0$-regular sheaf $\cF$ has a linear resolution,
i.e. a resolution of the form
\[
\cO^{b_0}\gets \cO(-1)^{b_1}\gets \dots \gets \cO(-n)^{b_n}\gets 0;
\]
see for instance ~\cite[Proposition~1.8.8]{lazarsfeld}.  This can also be shown using Beilinson's monad in terms of $\{\cO(-i)\}_{i=0}^n$.  

We generalize this result to arbitrary collections of line bundles as follows.  We will say that a sheaf $\cF$ has a \defi{pure resolution}\footnote{Pure resolutions play a central role in Boij--S\"oderberg theory, where they are, in a certain sense, dual to supernatural vector bundles~\cite{ES2008,eisenbud-erman-categorified}.} of type $(w_0,w_1,\dots,w_n)$ if there exists a resolution of the form:
\[
\cO(-w_0)^{b_0}\gets \cO(-w_1)^{b_1}\gets \dots \gets \cO(-w_n)^{b_n}\gets 0.
\]
For an arbitrary collection $W=\{\cO(-w_0),\cO(-w_1),\dots,\cO(-w_n)\}$, it is not hard to find examples where $\cF$ cannot have a resolution in terms of these line bundles.  In particular, there can be no such resolution whenever the expression for the class $[\cF]$ in terms of the $[\cO(-w_i)]$ in $\rK_0(\PP^n)_{\QQ}$ involves non-integral coefficients.

However, Corollary~\ref{cor:categorification} shows that the Fourier--Mukai transform $\Phi^W_1$ simultaneously clears all of the denominators for the expression of $[\cF]$. Once this obstruction is removed by $\Phi^W_1$, we obtain a pure resolution of type $(w_0,w_1,\dots,w_n)$ of any $w_0$-regular sheaf $\cF$.

\begin{cor} \label{cor:pure resolutions}
Let $\cF$ be any $w_0$-regular coherent sheaf on $\PP^n$ and fix any sequence $w_0 < w_1 < \dots < w_n$.  Let $W=\{\cO(-w_0),\cO(-w_1),\dots,\cO(-w_n)\}$.  Then $\Phi^W_1(\cF)$ admits a pure resolution of type $(w_0,w_1,\dots,w_n)$.
\end{cor}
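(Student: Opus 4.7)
The plan is to apply the spectral sequence of Corollary~\ref{cor:categorification}(3a),
\[
\rE^1_{p,q} = \cO(-w_q) \otimes \rH^{-p}(\PP^n, \cE_q \otimes \cF) \Longrightarrow \Phi^W_1(\cF),
\]
and to show that under the $w_0$-regularity hypothesis it degenerates at $\rE^1$ with only the row $p = 0$ surviving. Two preliminary observations make this reduction suffice: first, $\Phi^W_1(\cF)$ is a coherent sheaf in cohomological degree zero, since $\cU_W$ is set-theoretically supported on the diagonal (where $p_1$ restricts to an isomorphism), so that $\rR^i p_{1*}(p_2^*\cF \otimes \cU_W) = 0$ for $i > 0$; and second, once the spectral sequence collapses to its $p=0$ row, convergence forces the surviving complex
\[
\cO(-w_0)^{b_0} \gets \cO(-w_1)^{b_1} \gets \cdots \gets \cO(-w_n)^{b_n} \gets 0, \qquad b_q := \dim \rH^0(\PP^n, \cE_q \otimes \cF),
\]
to be a resolution of $\Phi^W_1(\cF)$, which is precisely the pure resolution of type $(w_0,\ldots,w_n)$ that we want.

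The heart of the proof is therefore the vanishing $\rH^i(\PP^n, \cE_q \otimes \cF) = 0$ for all $i \geq 1$ and all $0 \leq q \leq n$. The approach is to exploit the Schur-functor description of $\cE_q$ from~\eqref{eqn:Wperp-defn}, writing $\cE_q = \bS_{\lambda_q}(\cQ^*) \otimes \cO(c_q)$, and to resolve $\cE_q$ by line bundles via the Schur--Weyl (Lascoux) complex obtained by applying $\bS_{\lambda_q}$ to the dual Euler sequence $0 \to \cQ^* \to V^* \otimes \cO \to \cO(1) \to 0$. After a bookkeeping check matching the partition $\lambda_q$ and the twist $c_q$ against the root sequence of $\cE_q$ (whose largest root is $-w_0$ when $q > 0$ and $-w_1$ when $q = 0$), every line bundle $\cO(a)$ appearing in this resolution of $\cE_q$ satisfies $a \geq w_0$. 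Tensoring the resolution with $\cF$ and invoking $w_0$-regularity then delivers two things: (a) each term $\cF(a)$ has vanishing higher cohomology, so the hypercohomology spectral sequence computes $\rH^*(\cE_q \otimes \cF)$ as the cohomology of a complex of the form $\bigoplus \bS_\mu V^* \otimes \rH^0(\cF(a_\mu))$; and (b) the multiplication maps $V^* \otimes \rH^0(\cF(a)) \to \rH^0(\cF(a+1))$ are surjective for $a \geq w_0$. The resulting complex is a Schur complex of a surjection of free modules, and its exactness in positive degrees (by Akin--Buchsbaum--Weyman) gives the desired vanishing.

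The main obstacle is the bookkeeping step: verifying the lower bound $c_q \geq w_0$ on the line bundle twist in~\eqref{eqn:Wperp-defn} for every $q$, including the delicate case $q = 0$ (where $\cE_0$ may or may not be a line bundle depending on the arithmetic of $w_0 < w_1 < \cdots < w_n$). Once this bound is in hand, the cohomology vanishing and the spectral sequence collapse are formal, and the identification of the surviving row as a pure resolution of $\Phi^W_1(\cF)$ of type $(w_0,\ldots,w_n)$ follows at once from convergence to a sheaf concentrated in cohomological degree zero.
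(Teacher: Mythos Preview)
Your overall strategy---collapse the spectral sequence of Corollary~\ref{cor:categorification}(3a) to the row $p=0$ by proving $\rH^i(\PP^n,\cE_q\otimes\cF)=0$ for all $i\geq 1$---is exactly the paper's. The divergence is entirely in how you establish that vanishing.

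The paper does it in two lines: the largest root of $\cE_q$ is $-w_0$ (for $q\ne 0$) or $-w_1$ (for $q=0$), so $\reg\cE_q\le -w_0+1$; then the tensor bound $\reg(\cE_q\otimes\cF)\le\reg\cE_q+\reg\cF$ from \cite[Proposition~1.8.9]{lazarsfeld} gives $\reg(\cE_q\otimes\cF)\le 1$, hence no higher cohomology. No Schur--Lascoux machinery is needed.

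Your route is more elaborate, and step~(b) has a real gap. After applying $\rH^0$ to $L_{\lambda_q}(\phi)\otimes\cF(c)$ (with $\phi\colon V^*\otimes\cO\to\cO(1)$), the terms have the form $\bS_\mu(V^*)\otimes \rH^0(\cF(c+j_\mu))$ for \emph{varying} twists $j_\mu$. This is not the Schur complex $L_{\lambda_q}(\psi)$ of the single surjection $\psi\colon V^*\otimes \rH^0(\cF(c))\to \rH^0(\cF(c+1))$: the terms of $L_{\lambda_q}(\psi)$ are built from Schur and Weyl functors of $V^*\otimes \rH^0(\cF(c))$ and of $\rH^0(\cF(c+1))$, which (via the Cauchy decomposition) look nothing like $\bS_\mu(V^*)\otimes \rH^0(\cF(c+j_\mu))$. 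So the Akin--Buchsbaum--Weyman acyclicity criterion does not apply as stated. Even in the easiest case $\lambda=(1^k)$, your complex is the degree-$k$ strand of the Koszul complex of the $S$-module $M=\bigoplus_{d\ge 0}\rH^0(\cF(c+d))$, and its exactness in the required range amounts to $\Tor^S_j(\bk,M)_k=0$ for $j<k$---which is precisely the statement $\reg M\le 0$, i.e., the regularity input again rather than a Schur-complex fact.

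Finally, the ``bookkeeping'' you flag as the main obstacle is immediate: every $\cE_q$ carries the \emph{same} twist $c=w_n-n$ by \eqref{eqn:Wperp-defn}, and $w_n-n\ge w_0$ simply because $w_0<\cdots<w_n$ are integers. The genuine obstacle in your approach is step~(b), not the twist bound.
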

\noindent See also Example~\ref{ex:023 line}.
%%%%%%%%%%%%%%%%%%%%%
%%%%%%%%%%%%%%%%%%%%%
\subsection{Categorified Boij--S\"oderberg decompositions}\label{subsec:intro categorified}
%%%%%%%%%%%%%%%%%%%%%
%%%%%%%%%%%%%%%%%%%%%

The cohomology table $\gamma(\cF)$ of a vector bundle $\cF$ on $\PP^n$ is a table whose entries record the dimensions of all of the cohomology groups of $\cF$, with respect to twists of $\cF$ by all line bundles $\cO(j)$.  The $(i,j)$-entry of $\gamma(\cF)$ is given by the formula
\[
\gamma_{i,j}(\cF) := \rh^i(\PP^n,\cF(j)).
\]
Eisenbud and Schreyer show that the cohomology table of any vector bundle decomposes as a positive rational linear combination of the cohomology tables of supernatural vector bundles~\cite[Theorem~0.5]{ES2008}.

\begin{example} \label{eg:BS-decomp}
\addtocounter{equation}{-1}
\begin{subequations}
If $\cF$ is the cokernel of a generic matrix $\cO_{\PP^2}(-1)^5 \gets \cO_{\PP^2}(-2)^2$, then $\cF$ is a rank $3$ vector bundle on $\PP^2$. Using the short exact sequence
\[
0\gets \cF \gets \cO_{\PP^2}(-1)^5 \gets \cO_{\PP^2}(-2)^2 \gets 0
\]
and the genericity of the matrix, one can compute the cohomology groups $\rH^i(\PP^2, \cF(j))$ for all $i$ and $j$.  For instance, if we twist by $\cO(1)$ we see immediately that $\dim \rH^i(\cF(1))$ is $5$ if $i=0$ and is zero for $i=1,2$.

Boij--S\"oderberg theory enables us to succinctly encode all of the cohomology groups of $\cF$ via a convex, rational sum of supernatural bundles.  In this example, a direct computation shows that the supernatural bundles $\cE:=\cQ^*$ and $\cE':=(\Sym^2 \cQ^*)(1)$ will appear in the Boij--S\"oderberg decomposition of $\cF$, and that the decomposition works out to be:
\begin{equation}\label{eqn:gamma add}
\gamma(\cF) = \gamma(\cE) + \tfrac{1}{3}\gamma(\cE').
\end{equation}
The appearance of rational coefficients is very common in those sorts of computations.  
\end{subequations}
\end{example}

A fundamental mystery raised by Boij--S\"oderberg theory is whether these numerical decompositions admit meaningful categorifications to the level of vector bundles.

There is an obvious obstacle to categorifying \eqref{eqn:gamma add}: the non-integral coefficient.  
Naively, we might replace $\cF$ by $\cF^{\oplus 3}$ and hope for a splitting $\cF^{\oplus 3}\cong \cE^{\oplus 3}\oplus \cE'$, but that line of thinking has not produced meaningful categorifications.  
There has been work on categorification on the dual side of Boij--S\"oderberg theory involving Betti tables~\cite{ees-filtering}, but those results never involve cases with a non-integral coefficient.

There is also a more subtle obstacle to categorifying Boij--S\"oderberg decompositions which is given by the fact that supernatural bundles themselves can have moduli.  

The Fourier--Mukai transform $\Phi_2^W$ can address both of these obstacles.  The sheaf $\cU_W$ is a twisted Ulrich sheaf (see Proposition~\ref{prop:ulrich}), and thus---as already observed in Corollary~\ref{cor:categorification}---the transform $\Phi^W_2$ has an effect similar to scalar multiplication.  Moreover, we will see in Corollary~\ref{cor:supernatural becomes equivariant} that $\Phi^W_2$ also addresses the second obstacle by entirely collapsing the moduli of certain supernatural bundles.

\begin{example} \label{eg:BS-cat}
Returning to Example~\ref{eg:BS-decomp}, the transformed bundle $\Phi_2^W(\cF)$ splits as:
\[
\Phi_2^W(\cF)\cong \cE^{\oplus 3}\oplus  \cE'.
\]
This implies \eqref{eqn:gamma add}, yielding the desired categorification.  See Example~\ref{eg:BS-explain} for details.
\end{example}

This example is a special case of the following result.  

\begin{thm} \label{thm:categorified special case}
Continue with the notation of Theorem~\ref{thm:main comparison}.  Let $\cF$ be a vector bundle on $\PP^n$ and assume that all summands in the Boij--S\"oderberg decomposition of $\cF$ come from $W^\perp$, i.e. assume that we have an expression
\[
\gamma(\cF) = \sum_{i=0}^n a_i\gamma(\cE_i) \text{ where } a_i\in \QQ_{\geq 0}.
\]
Then $\gamma(\Phi^W_2(\cF)) = N_W \gamma(\cF)$, and the transformed bundle $\Phi^W_2(\cF)$ has a filtration  
\[
0=\cF_{-1} \subseteq \cF_0\subseteq \cF_1\subseteq \dots \subseteq \cF_n=\Phi^W_2(\cF)
\]
where
\[
\cF_i/\cF_{i-1} \cong \cE_i^{\oplus N_W\cdot a_i}.
\]
\end{thm}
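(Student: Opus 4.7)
The plan is to apply the spectral sequence
\[
\widehat{\rE}^1_{p,q} = \rH^{-p}(\PP^n, \cF(-w_q)) \otimes \cE_q \Longrightarrow \Phi^W_2(\cF)
\]
furnished by Corollary~\ref{cor:categorification}(3)(b), and to show that the Boij--S\"oderberg hypothesis collapses the $\rE^1$ page to a single antidiagonal. This degeneration should simultaneously produce the filtration and, with a further argument, the cohomology identity.

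The $\rE^1$ page is computed by substituting the hypothesis $\gamma(\cF) = \sum_k a_k \gamma(\cE_k)$ and invoking the orthogonality relations of Theorem~\ref{thm:main comparison}(2):
\[
\rh^{-p}(\cF(-w_q)) = \sum_k a_k \rh^{-p}(\cE_k(-w_q)) = \begin{cases} a_q N_W & \text{if } -p = q, \\ 0 & \text{otherwise.} \end{cases}
\]
Thus $\widehat{\rE}^1_{p,q} = \cE_q^{\oplus a_q N_W}$ when $p+q=0$ and vanishes elsewhere. Since each differential $d_r$ shifts the total degree $p+q$ by $-1$, every differential lands in the zero region, and the spectral sequence degenerates at $\rE^1$. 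The abutment is then concentrated in cohomological degree zero, so $\Phi^W_2(\cF)$ is a genuine sheaf, and the convergence yields the filtration $0 = \cF_{-1} \subseteq \cF_0 \subseteq \dots \subseteq \cF_n = \Phi^W_2(\cF)$ with $\cF_i/\cF_{i-1} \cong \cE_i^{\oplus a_i N_W}$.

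To establish $\gamma(\Phi^W_2(\cF)) = N_W \gamma(\cF)$, I would twist the filtration by $\cO(j)$ and analyze the resulting long exact sequences inductively; standard estimates produce the inequality
\[
\rh^k(\Phi^W_2(\cF)(j)) \leq \sum_p a_p N_W \rh^k(\cE_p(j)) = N_W \rh^k(\cF(j)),
\]
whose last equality uses the Boij--S\"oderberg hypothesis. The principal obstacle---and the main technical step---is to upgrade each such inequality to equality by showing that every connecting homomorphism vanishes. I expect this to follow from the chain structure on the root sequences of $\cE_0, \dots, \cE_n$: in a Boij--S\"oderberg chain, the cohomological supports of the $\cE_p(j)$ are constrained so that no nontrivial connecting map is possible, and indeed this rigidity is precisely what distinguishes an honest decomposition $\gamma(\cF) = \sum a_k \gamma(\cE_k)$ from a mere identity of alternating sums of Euler characteristics. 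Concretely, the goal would be to show that for $r < p$ the pair $\rh^k(\cE_r(j)) \neq 0 \neq \rh^{k-1}(\cE_p(j))$ is impossible, using the total ordering on root sequences to exclude such overlaps. Once that vanishing is in place, the desired cohomological identity follows at once by induction on the length of the filtration.
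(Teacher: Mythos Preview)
Your derivation of the filtration is exactly the paper's: the Boij--S\"oderberg hypothesis plus the orthogonality in Theorem~\ref{thm:main comparison}(2) collapses $\widehat{\rE}^1$ to the antidiagonal, and the abutment filtration has the stated graded pieces.

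For the identity $\gamma(\Phi^W_2(\cF)) = N_W\gamma(\cF)$, you diverge from the paper. The paper also gets the upper bound from subadditivity along the filtration, but it obtains the matching \emph{lower} bound from the general semicontinuity statement Proposition~\ref{prop:semicont}, whose proof passes through Lemmas~\ref{lem:ext-vanishing} and~\ref{lem: extension} on equivariant extensions and a deformation over $\AA^1$. Your route is instead to show equality directly by killing all connecting maps in the long exact sequences for the filtration. Your proposed mechanism is correct and worth making explicit: for $r<p$ the root sequences satisfy $f^{(p)}_k \ge f^{(r)}_k$ for every $k$ (they agree except on the range $r<k\le p$, where $f^{(p)}_k=-w_{k-1}>-w_k=f^{(r)}_k$), so the interval where $\rh^{k-1}(\cE_p(j))\ne 0$ lies at or above the interval where $\rh^k(\cE_r(j))\ne 0$, and the overlap you need to exclude is indeed impossible. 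Induction on the filtration then gives $\rh^k(\cF_i(j)) = \sum_{r\le i} a_r N_W\,\rh^k(\cE_r(j))$ for all $i$.

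Your argument is more elementary and entirely self-contained for this theorem; it avoids the machinery of \S\ref{sec:UW} and the equivariant extension lemmas. What the paper's approach buys is the general inequality $\rh^i(\Phi^W_2(\cF)(d)) \ge N_W\cdot \rh^i(\cF(d))$ for \emph{arbitrary} coherent $\cF$, independent of any Boij--S\"oderberg hypothesis, which is of separate interest (cf.\ Remark~\ref{rmk:general}).
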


We view Theorem~\ref{thm:categorified special case} as a proof of concept for the idea that the positive rational Boij--S\"oderberg decompositions of cohomology tables should admit meaningful categorifications. Namely, the Fourier--Mukai transform $\Phi^W_2$ provides a novel mechanism for introducing a scalar multiple and clearing denominators,
and the spectral sequence for $\Phi^W_2$ from~Corollary~\ref{cor:categorification} is based on supernatural bundles.  
In fact, Theorem~\ref{thm:categorified special case} yields the first known categorification of Boij--S\"oderberg decompositions that genuinely make use of the $\QQ$-coefficients.  

Even the simplest case of Theorem~~\ref{thm:categorified special case} is surprising, at least to the authors.  For instance, as remarked earlier, supernatural bundles with a given root sequence can have nontrivial moduli~\cite[\S6]{ES-river}.  This moduli collapses entirely after applying $\Phi^W_2$:

\begin{cor}\label{cor:supernatural becomes equivariant}
Let $\cF$ be any supernatural sheaf with root sequence $\{-w_0,-w_1,\dots,-w_n\} \setminus \{-w_i\}$.  Then $\Phi^W_2(\cF)$ is a direct sum of copies of the equivariant supernatural sheaf $\cE_i$. More precisely, setting $m=\frac{N_W\cdot \rank \cF}{\rank \cE_i}$, we have $\Phi^W_2(\cF) \cong \cE_i^{\oplus m}$.
\end{cor}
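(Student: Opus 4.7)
The plan is to deduce this corollary directly from Theorem~\ref{thm:categorified special case} by showing that the Boij--S\"oderberg decomposition of $\gamma(\cF)$ along $W^\perp$ collapses to a single term. I would proceed in three steps.

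First I would verify that the distinguished bundle $\cE_i \in W^\perp$ is itself supernatural with root sequence $\{-w_0, \dots, -w_n\} \setminus \{-w_i\}$. This should be immediate from Theorem~\ref{thm:main comparison}\eqref{Orthogonal}: among the twists $\cE_i\otimes \cO(-w_j)$, only $\rH^i(\cE_i(-w_i))$ is nonvanishing, which, combined with the fact that $\chi(\cE_i(d))$ is a polynomial of degree $n$, pins down the $n$ integer roots of that Hilbert polynomial as exactly the $-w_j$ with $j\ne i$. (This matches the explicit description of $W^\perp$ in \eqref{eqn:Wperp-defn}.)

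Second, I would invoke the extremality statement at the heart of Boij--S\"oderberg theory \cite[Theorem~0.5]{ES2008}: the cohomology tables of supernatural bundles with a fixed root sequence span a one-dimensional extremal ray of the cone of cohomology tables. Applied to $\cF$ and $\cE_i$, which share the same root sequence, this forces
\[
\gamma(\cF) \;=\; c \cdot \gamma(\cE_i) \qquad \text{for some } c \in \QQ_{>0}.
\]
Comparing leading coefficients of the two Hilbert polynomials (equivalently, comparing ranks) identifies $c = \rank \cF / \rank \cE_i$. In particular, writing this as a decomposition along $W^\perp$ gives $a_i = c$ and $a_j = 0$ for $j\ne i$, so the hypothesis of Theorem~\ref{thm:categorified special case} is trivially satisfied.

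Finally, I would feed this one-term decomposition into Theorem~\ref{thm:categorified special case}. The filtration $0 = \cF_{-1}\subseteq \cF_0 \subseteq \cdots \subseteq \cF_n = \Phi_2^W(\cF)$ has $\cF_j/\cF_{j-1} = 0$ for $j\ne i$ and $\cF_i/\cF_{i-1}\cong \cE_i^{\oplus N_W c}$, so it degenerates to $\Phi_2^W(\cF)\cong \cE_i^{\oplus m}$ with $m = N_W c = N_W\rank\cF/\rank\cE_i$, as claimed. I do not foresee a genuine technical obstacle here, since the heavy lifting is in Theorem~\ref{thm:categorified special case}; the only point worth emphasizing is a conceptual one, namely that although $\cF$ may vary in positive-dimensional moduli of supernatural bundles with the given root sequence \cite[\S6]{ES-river}, the Fourier--Mukai transform $\Phi_2^W$ depends only on the numerical cohomology table of $\cF$, so all such $\cF$ collapse to the same $\GL$-equivariant bundle (up to the multiplicity $m$).
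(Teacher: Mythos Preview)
Your proposal is correct and matches the paper's approach: the paper explicitly records (in a remark following the proof of Theorem~\ref{thm:categorified special case}) that Corollary~\ref{cor:supernatural becomes equivariant} is a special case of that theorem, and your three steps are precisely the way one specializes. One small sharpening: in Step~2 you need not invoke the full extremality theorem---for a supernatural bundle the cohomology table is determined by the Hilbert polynomial (only one cohomology group can be nonzero at each twist), and the Hilbert polynomial is determined up to a rank factor by the root sequence, so $\gamma(\cF)=(\rank\cF/\rank\cE_i)\,\gamma(\cE_i)$ is immediate; also note that since the filtration here has a single nonzero graded piece, you are only using the filtration clause of Theorem~\ref{thm:categorified special case} and hence (as the paper points out) Proposition~\ref{prop:semicont} is not needed.
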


Theorem~\ref{thm:categorified special case} does not apply to a general vector bundle, as for many vector bundles, the summands in the Boij--S\"oderberg decomposition will not come from a single $W^\perp$.  See Remark~\ref{rmk:general} for brief comments about some of the challenges in generalizing Theorem~\ref{thm:categorified special case}.

\subsection*{Acknowledgements} 
We thank David Eisenbud and Frank-Olaf Schreyer for discussions that inspired this work.  We also thank \.{I}zzet Co\c{s}kun, Brendan Hassett, Jack Huizenga, and Matthew Woolf for useful discussions.  We thank the referee for helpful comments.

\section{Background}\label{sec:background}

Fix a field $\bk$ of characteristic zero. 

The Schur functor $\bS_\lambda$ is defined for any partition $\lambda = (\lambda_1 \ge \lambda_2 \ge \cdots \ge 0)$. It can be applied to any vector bundle $\cE$; the result is a vector bundle $\bS_\lambda \cE$. If $\alpha = (\alpha_1 \ge \dots \ge \alpha_r)$ is a weakly decreasing sequence and $\rank \cE = r$, then define $\bS_\alpha \cE = (\det \cE)^{\otimes \alpha_r} \otimes \bS_\lambda \cE$ where $\lambda_i = \alpha_i - \alpha_r$. If $\alpha_r \ge 0$, this is consistent with properties of Schur functors. An important property is that $(\bS_\lambda \cE)^* \cong \bS_\beta \cE$ where $\beta = (-\alpha_r, -\alpha_{r-1}, \dots, -\alpha_1)$. We refer the reader to \cite[\S 2]{weyman} for details on Schur functors, but we point out that our notation for Schur functors coincides with the notation for Weyl functors, denoted ${\bf K}_\lambda$, used there.

Let $S=\bk[x_0,\dots,x_n]$ and $\PP^n=\PP^n_{\bk}$.  A \defi{degree sequence of length $s$} is a sequence $\bd = (d_0,d_1,\dots,d_{s})\in\ZZ^{s+1}$ where $d_i<d_{i+1}$ for each $i$.  A \defi{root sequence} for $\PP^n$ is a sequence $f=(f_1,\dots,f_n)$ where $f_i>f_{i+1}$ for each $i$.  Although there are more general notions of degree sequence and root sequence in the literature on Boij--S\"oderberg theory (\cite[Definition 1]{boij-sod-1} \cite[Introduction]{ES2010}, \cite[Definition 1.2]{eisenbud-erman-categorified}, \cite[Definition 3.1]{kummini-sam}), we will not use those notions.

We compare root sequences via the partial order $f\leq f'$, which holds if $f_i\leq f_i'$ for all $i$, and we define a chain $(\dots, f^{(j)},f^{(j+1)},\dots)_{j \in \ZZ}$ of root sequences as a collection of root sequences such that $f^{(j)}< f^{(j+1)}$ for all $j\in \ZZ$.  

Given a root sequence $f=(f_1,\dots,f_n)$, a vector bundle $\cE$ on $\PP^n$ is a \defi{supernatural bundle of type $f$} if for each $j\in \ZZ$ we have that $\rH^i(\PP^n,\cE(j))\ne 0$ for at most one $i$, and if $\rH^\bullet(\PP^n, \cE(f_j)) = 0$ for $j=1,\dots,n$. These are the vector bundles with the fewest possible nonzero cohomology groups. Eisenbud and Schreyer have shown the existence of supernatural vector bundles via two separate constructions.  The first construction works only in characteristic zero (and was observed by Weyman): if we define a partition $\mu$ via $\mu_i = f_1 - f_{n+1-i} - n + i$, then $\bS_\mu \cQ$ is a supernatural bundle of type $f$~\cite[Theorem~6.2]{ES2008}.  A second construction involves the pushforward of a line bundle from a product of projective spaces, and that construction works in arbitrary characteristic~\cite[Theorem~6.1]{ES2008} (see also \cite{tensor-complexes}). 

Given a degree sequence $\bd=(d_0,\dots,d_s)$, a free complex $F_\bullet = [F_0\gets F_1\gets \cdots \gets F_s\gets 0]$ of graded $S$-modules is a \defi{pure resolution of type $\bd$} if $F_\bullet$ is acyclic and if $F_i\cong S(-d_i)^{b_i}$ for some $b_i>0$.  Boij and S\"oderberg conjectured that for any degree sequence $\bd$, there is a pure resolution of type $\bd$ that, moreover, resolves a Cohen--Macaulay module~\cite{boij-sod-1}.  This conjecture was proven by Eisenbud, Fl{\o}ystad, and Weyman in~\cite[Theorems~0.1 and 0.2]{EFW} in characteristic zero, and by Eisenbud and Schreyer~\cite[Theorem~0.1]{ES2008} in arbitrary characteristic. We will only be interested in the case $s=n$.

The construction of pure resolutions from \cite[Theorem~0.1]{EFW} will be most relevant for us, and we discuss a relative version of that construction. Let $E$ be a vector space of dimension $n$ over $\bk$. Let $A = \Sym(E)$.  

Let $\bd = (d_0, \dots, d_n)$ be a degree sequence. For $j=0,1,\dots,n$, define partitions $\lambda(\bd)^j = (\lambda(\bd)^j_1, \dots, \lambda(\bd)^j_n)$ by 
\begin{equation}\label{eqn:lambda i}
\lambda(\bd)^j_i = \begin{cases} 
d_n - d_{i-1} - (n-i) & i \le j,\\
d_n - d_i - (n-i) & i > j.
\end{cases}
\end{equation}
If it is clear from context, we will write $\lambda^j$ instead of $\lambda(\bd)^j$. The \defi{EFW complex} of $\bd$ and $E$ is the following complex
\begin{subequations}
\begin{align} \label{eqn:EFW-local}
A \otimes \bS_{\lambda(\bd)^0}(E)\gets A \otimes \bS_{\lambda(\bd)^1}(E)  \gets \cdots \gets A \otimes \bS_{\lambda(\bd)^n}(E) \gets 0.
\end{align}
It is an acyclic complex and resolves a finite length Cohen--Macaulay module.  Furthermore, this complex is equivariant for the action of $\GL(E)$. In fact, the differentials $\bS_{\lambda(\bd)^i}(E) \to A \otimes \bS_{\lambda(\bd)^{i-1}}(E)$ are defined using Pieri's rule, which gives an inclusion of representations
\[
\bS_{\lambda(\bd)^i}(E) \to A_{d_i-d_{i-1}} \otimes \bS_{\lambda(\bd)^{i-1}}(E).
\]
The differential is unique up to scalar because the target representation is multiplicity-free.

These facts remain true if we replace $E$ by a vector bundle $\cE$, so we can globalize the construction as follows. Let $\cE$ be a vector bundle of rank $n$ over a $\bk$-scheme $X$. Let $\cA = \Sym(\cE)$.  
The \defi{EFW complex} of $\bd$ and $\cE$ is the following complex of vector bundles 
\begin{align} \label{eqn:EFW}
\cA \otimes \bS_{\lambda(\bd)^0}(\cE)\gets \cA \otimes \bS_{\lambda(\bd)^1}(\cE)  \gets \cdots \gets \cA \otimes \bS_{\lambda(\bd)^n}(\cE) \gets 0.
\end{align}
\end{subequations}
It is an acyclic complex and resolves a Cohen--Macaulay sheaf which is set-theoretically supported in the zero section of $\cE^* = \Spec_X(\cA)$ (since locally our complex is modeled on \eqref{eqn:EFW-local}). Furthermore, in the polynomial case above, the cokernel has a grading. In the global setting, this grading defines a filtration for the cokernel whose associated graded is a vector bundle on the zero section of $\cE^*$.

Let $V$ be a vector space of dimension $n+1$ and let $\PP(V)$ be the space of lines. Then we have a tautological exact sequence
\[
0 \to \cO(-1) \to V\otimes \cO \to \cQ \to 0.
\]
Since we will use it a few times, we recall the Borel--Weil--Bott theorem for $\PP(V)$ (see \cite[Corollary 4.1.9]{weyman}). Given a permutation $\sigma$, we define the {\bf length} of $\sigma$ to be $\ell(\sigma) = \#\{ i < j \mid \sigma(i) > \sigma(j) \}$. Also, define $\rho = (n, n-1, \dots, 1, 0)$. Given a sequence of integers $\alpha \in \ZZ^{n+1}$, we define $\sigma \bullet \alpha = \sigma(\alpha + \rho) - \rho$.

\begin{thm}[Borel--Weil--Bott] \label{thm:bott} 
Let $\alpha = (\alpha_1, \dots, \alpha_n)$ be a weakly decreasing sequence, pick $d \in \ZZ$, and set $\beta = (d, \alpha_1, \dots, \alpha_n)$. Then exactly one of the following two situations occurs.
\begin{enumerate}[\rm 1.]
\item There exists $\sigma \ne \mathrm{id}$ such that $\sigma \bullet \beta = \beta$. Then all cohomology of $\bS_\alpha \cQ^* \otimes \cO(d)$ vanishes.
\item There is a $($unique$)$ $\sigma$ such that $\gamma = \sigma \bullet \beta$ is a weakly decreasing sequence. Then
\[
\rH^{\ell(\sigma)}(\PP(V), \bS_\alpha \cQ^* \otimes \cO(d)) = \bS_\gamma(V^*)
\]
and all other cohomology vanishes. 
\end{enumerate}
\end{thm}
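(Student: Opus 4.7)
The plan is to invoke the Borel--Weil--Bott theorem for equivariant bundles on the homogeneous space $\PP(V) = \GL(V)/P$, where $P$ is the maximal parabolic stabilizing a line. Under the dictionary between $P$-representations and equivariant bundles, $\bS_\alpha \cQ^* \otimes \cO(d)$ corresponds to the weight $\beta = (d, \alpha_1, \dots, \alpha_n)$: the quotient $\cQ^*$ restricts to the standard representation of the Levi $\GL(n)$-factor, while $\cO(-1)$ is the first character. With this identification, the dichotomy in the conclusion is the classical Bott theorem at this parabolic, where the $\rho$-shifted dot action and the vanishing on Weyl chamber walls give precisely the description in the statement.

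For a more hands-on derivation avoiding the general flag variety machinery, I would proceed by induction on the number of inversions of $\beta + \rho = (d+n, \alpha_1 + n-1, \dots, \alpha_n)$, using the tautological exact sequence $0 \to \cO(-1) \to V \otimes \cO \to \cQ \to 0$ as the key geometric input. The base case occurs when $\beta$ is already weakly decreasing, so $\sigma = \mathrm{id}$; here I would verify $\rH^0(\PP(V), \bS_\alpha \cQ^* \otimes \cO(d)) = \bS_\beta(V^*)$ directly by realizing global sections as the $\bS_\beta$-isotypic component of $\Sym^\bullet(V^* \otimes \cQ^*)$ in the appropriate multidegree, with higher cohomology vanishing via a Kodaira-type argument after filtering by line bundles (all of which are nef in this regime).

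For the inductive step, suppose $\beta + \rho$ has an inversion at adjacent positions $i, i+1$. I would construct a short exact sequence of equivariant bundles, obtained from the tautological sequence via an appropriate Schur complex applied in the $(i,i{+}1)$-tensor slots, whose outer terms realize the swap $\beta \leftrightarrow s_i \bullet \beta$ (here $s_i$ is the adjacent transposition). The associated long exact sequence either shifts cohomological degree by one (when $s_i \bullet \beta \ne \beta$, bringing us closer to a weakly decreasing sequence and adding one to the cohomological degree, consistent with $\ell(\sigma)$) or forces total vanishing (when $s_i \bullet \beta = \beta$, since the two outer terms in the filtration then coincide and the filtration collapses, trapping $\bS_\alpha \cQ^* \otimes \cO(d)$ between bundles that cancel in cohomology). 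The main obstacle is executing this swap lemma at interior positions $i > 1$: matching Schur functor decompositions under an adjacent transposition is a substantial Pieri/Littlewood--Richardson bookkeeping exercise. This is why, as the excerpt does, the cleanest option is simply to cite \cite[Corollary~4.1.9]{weyman} directly rather than reconstruct the argument in full.
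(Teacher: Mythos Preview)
The paper does not prove this theorem: it is stated as background and attributed directly to \cite[Corollary~4.1.9]{weyman}. Your proposal ultimately lands in the same place, explicitly noting that the cleanest option is to cite that corollary, so in that sense you match the paper exactly. The additional sketch you offer (identifying $\bS_\alpha \cQ^* \otimes \cO(d)$ with the $P$-representation of weight $\beta$ and outlining an inductive swap argument via adjacent transpositions) is a reasonable gloss on how Bott's theorem is proved in this setting, but it goes well beyond what the paper does and is not needed for the comparison.
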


\section{Proof of Theorem~\ref{thm:main comparison}}

Pick a vector space $V$ of dimension $n+1$ and let $X = \PP(V) \times \PP(V)$. We have a $\GL(V)$-equivariant isomorphism $\rH^0(X; \cO(1) \boxtimes \cQ) = {\rm End}(V)$ which can be explicitly obtained by sending $\phi \in {\rm End}(V)$ to the composition 
\[
\cO(-1) \boxtimes \cO \to V \boxtimes \cO \xrightarrow{\phi} \cO \boxtimes V \to \cO \boxtimes \cQ.
\]
So the zero locus of the identity map in ${\rm End}(V)$ is the diagonal $\Delta_{\PP(V)} \subset X$.

Take $\cE = \cO(-1) \boxtimes \cQ^*$ and build \eqref{eqn:EFW} with $\bd = {\bf w} = (w_0, \dots, w_n)$. Under the identification $\rH^0(X; \cE^*) = {\rm End}(V)$ above, the section of $\cE^*$ corresponding to the identity in ${\rm End}(V)$ gives a map $\Sym(\cE) \to \cO_X$; tensoring with it, we get an equivariant complex:
\[
\cO(-|\lambda^0|) \boxtimes \bS_{\lambda^0}(\cQ^*) \gets
\cO(-|\lambda^1|) \boxtimes \bS_{\lambda^1}(\cQ^*) \gets 
\cdots \gets
\cO(-|\lambda^n|) \boxtimes \bS_{\lambda^n}(\cQ^*) \gets 0.
\]
We define $\bE_W$ as the complex obtained by twisting the above complex by $\cO(|\lambda^0|-w_0) \boxtimes \cO(w_n - n)$. 

\begin{lemma}
$\bE_W$ is acyclic and it resolves a Cohen--Macaulay sheaf $\cU_W$ which is set-theoretically supported on the diagonal $\Delta_{\PP(V)}$.
Furthermore, $\cU_W$ has a filtration whose associated graded is a vector bundle over $\Delta_{\PP(V)}$.
\end{lemma}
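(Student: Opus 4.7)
The plan is to leverage the properties of the EFW complex \eqref{eqn:EFW} already recalled in the excerpt, passing from the total space of $\cE^* = \cO(1) \boxtimes \cQ$ down to $X$ by tensoring with the section $s$ corresponding to $\mathrm{id}_V$. The key input is that $s$ is a regular section: its zero locus is $\Delta_{\PP(V)}$, which has the expected codimension $n = \rank \cE$ in $X$. Equivalently, the graph $s(X) \subset \cE^*$ meets the zero section $X_0 \cong X$ transversely along $\Delta_{\PP(V)}$.

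Let $F_\bullet$ denote the EFW complex \eqref{eqn:EFW} for $\cE = \cO(-1) \boxtimes \cQ^*$ with $\bd = (w_0, \dots, w_n)$, and let $\mathcal{M}$ be its cokernel. By the properties recalled for the EFW complex, $\mathcal{M}$ is a Cohen--Macaulay $\cA$-module of codimension $n$ in $\cE^*$, set-theoretically supported on $X_0$, and equipped with a filtration $0 = \mathcal{M}_{-1} \subset \mathcal{M}_0 \subset \mathcal{M}_1 \subset \cdots$ whose associated graded $\bigoplus_j \mathcal{V}_j$ is a vector bundle on $X_0$. Up to the overall twist built into the definition of $\bE_W$, the complex $\bE_W$ equals $F_\bullet \otimes_{\cA} \cO_X$, where $\cO_X$ is viewed as an $\cA$-module via $s$; so all three assertions of the lemma reduce to statements about this tensor product.

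The heart of the argument is the vanishing $\Tor^\cA_i(\mathcal{M}, \cO_X) = 0$ for $i > 0$. Since $\mathcal{M}$ and $\cO_X$ are Cohen--Macaulay $\cA$-modules each of codimension $n$ in $\cE^*$, whose supports meet in the expected codimension $2n$ (namely along $\Delta_{\PP(V)}$), this is the standard ``proper intersection'' Tor-vanishing for Cohen--Macaulay modules; equivalently, one can resolve $\cO_X$ over $\cA$ by the Koszul complex on the components of $s$, which is exact precisely because $s$ is a regular section. This simultaneously gives acyclicity of $\bE_W$ and the Cohen--Macaulay property of $\cU_W := \mathcal{M} \otimes_\cA \cO_X$, with set-theoretic support $\Delta_{\PP(V)}$. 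Applying the same Tor-vanishing to each graded piece $\mathcal{V}_j$ (itself Cohen--Macaulay of codimension $n$ in $\cE^*$) shows that tensoring the filtration of $\mathcal{M}$ by $\cO_X$ remains exact, and so induces a filtration of $\cU_W$ whose associated graded is $\bigoplus_j \mathcal{V}_j|_{\Delta_{\PP(V)}}$---a vector bundle on $\Delta_{\PP(V)}$ by transversality. I expect the main technical point to be exactly this last step: verifying that the Tor-vanishing applies to each graded piece $\mathcal{V}_j$ individually, so that the filtration actually descends from $\mathcal{M}$ to $\cU_W$ rather than merely passing to a sequence of quotients.
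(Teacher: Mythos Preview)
Your proof is correct and follows essentially the same approach as the paper: both arguments reduce acyclicity and the Cohen--Macaulay property to a codimension count, with the paper invoking the Eagon--Northcott generic perfection theorem \cite[Theorem~1.2.14]{weyman} while you phrase the same fact as Tor-vanishing for Cohen--Macaulay modules over a regular ring whose supports intersect properly. The filtration step is likewise the same in substance---the paper's remark that ``the associated graded is flat over the zero section'' is exactly your observation that each $\mathcal{V}_j$ is locally free on $X_0$, so that its higher Tor against $\cO_X$ vanishes by the regularity of the section $s$.
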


\begin{proof}
To check that $\bE_W$ is acyclic, it suffices to check this statement locally, so we may assume we are working over a local ring. In that case, we apply the Eagon--Northcott generic perfection theorem \cite[Theorem 1.2.14]{weyman}. Since $\Spec(\Sym(\cE))$ is smooth, it is in particular Cohen--Macaulay, as are all of its local rings, so the depth of an ideal coincides with its codimension. Let $M$ be the cokernel of \eqref{eqn:EFW}. Then $M$ is locally a perfect module by the Auslander--Buchsbaum formula \cite[Theorem 19.9]{eisenbud}. So the criteria in \cite[Theorem 1.2.14]{weyman} are asking for the codimension of ${\rm Ann}(M)$ (which is $n$) to be $n$ after tensoring with $\cO_X$. Now, ${\rm Ann}(M \otimes \cO_X)$ and ${\rm Ann}(M) \cO_X$ have the same radical, and the radical of ${\rm Ann}(M)$ is the zero section of $\Spec(\Sym(\cE))$, so ${\rm Ann}(M) \cO_X$ is, up to radical, the diagonal of $X$, so this is just the statement that the diagonal has codimension $n$.  

The filtration in the last sentence is obtained by tensoring the filtration on the cokernel of \eqref{eqn:EFW} discussed above along the map $\Sym(\cE) \to \cO_X$, noting that tensoring is compatible with taking associated graded since the associated graded is flat over the zero section. The associated graded is Cohen--Macaulay (which can be deduced from the generic perfection theorem) and hence is a vector bundle since its support is smooth (the implication is provided by the Auslander--Buchsbaum formula).
\end{proof}

The sheaf $\cU_W$ has an action of $\GL(V)$, so its pushforward along either projection $\PP(V) \times \PP(V) \to \PP(V)$ is homogeneous, and hence is a vector bundle. Furthermore, the pushforward from the support of $\cU_W$ to either $\PP(V)$ is exact, so we can calculate the rank of this bundle as the multiplicity of the cokernel of the EFW complex, which is $N_W$ (this is implicit in the description of the cokernel given in \cite[Theorem 3.2(2)]{EFW}, and follows more directly from the description given in \cite[Remark 4.3.3]{symc1}).

By Borel--Weil--Bott (Theorem~\ref{thm:bott}), the sheaf cohomology of $\cO(d) \otimes \bS_\mu(\cQ^*)$ vanishes if and only if $d = \mu_i - i$ for some $i$, so $\bS_\mu(\cQ^*)$ is a supernatural vector bundle whose Hilbert polynomial has roots $(\mu_1 - 1, \mu_2 - 2, \dots, \mu_n - n)$. In particular, $\bS_{\lambda^j}(\cQ^*) \otimes \cO(w_n - n)$ is a supernatural vector bundle whose Hilbert polynomial has roots $\{-w_0,\dots,-w_n\}\setminus \{-w_j\}$.  Thus, setting 
\begin{align} \label{eqn:Wperp-defn}
W^\perp_j = \cE_j := \bS_{\lambda({\bf w})^j}(\cQ^*) \otimes \cO(w_n - n)
\end{align}
we obtain the desired supernatural bundle. This proves (3). It follows from Borel--Weil--Bott (Theorem~\ref{thm:bott}) that $\rH^i(\PP(V), \cE_i(-w_i)) = \bS_{\mu(W)}(V^*)$ and that the cohomology vanishes in other degrees. This proves part of (2).

We now use the cohomology calculation to deduce that $W$ and $W^\perp$ both span a subgroup of $\rK_0(\PP^n)$ of index $N_W$. We have a nondegenerate bilinear pairing on $\rK_0(\PP^n)$ given by 
\[
\langle [\cE], [\cF] \rangle = \sum_{i=0}^n (-1)^i \dim \Ext^i(\cE, \cF)
\]
(nondegeneracy can be proven by observing that on the basis $[\cO], [\cO(-1)], \dots, [\cO(-n)]$, the Gram matrix is upper unitriangular). First, we show that $W$ is linearly independent in $\rK_0(\PP^n)_\QQ$. Suppose that we have an expression $0 = \sum_i a_i [\cO(-w_i)]$. Applying $\langle [\cE_j^*] , - \rangle$, we get:
\begin{align*}
0&=\sum_{i=0}^n a_i \langle  [\cE_j^*] , [\cO(-w_i)] \rangle\\
& = \sum_{i=0}^n a_i \left(\sum_{\ell =0}^n (-1)^\ell \dim \Ext^\ell(\cE_j^*, \cO(-w_i))\right)\\
&=\sum_{i=0}^n a_i \left(\sum_{\ell =0}^n (-1)^\ell \dim \rH^\ell(\cE_j(-w_i))\right).\\
\intertext{Using the above cohomology computations for $\cE_j$ then yields}
&=a_j \left((-1)^j\dim \rH^j(\cE_j(-w_j))\right) = (-1)^j a_jN_W.
\end{align*}
It follows that $a_j=0$ for all $j$, and thus that $W$ is linearly independent in $\rK_0(\PP^n)_\QQ$. 
A similar argument shows that $W^\perp$ is linearly independent.  Thus $W$ and $W^\perp$ each forms a basis for $\rK_0(\PP^n)_\QQ$.

Now suppose that we have an expression $[\cF] = \sum_i a_i [\cO(-w_i)]$ where $a_i \in \QQ$ and $\cF$ is a vector bundle. Again, we deduce that 
$a_j = \frac{(-1)^j}{N_W} \langle [\cE_j^*], [\cF] \rangle$, so $N_W [\cF]$ is in the $\ZZ$-span of $W$ for all $\cF$. A similar argument applies to $W^\perp$. This finishes the proof of (1) and (2).

\begin{remark}
Alternatively, to prove that $W$ spans a subgroup of index $N_W$ in $\rK_0(\PP^n)$, it suffices to prove that for any $\cO(-d)\notin W$, we can write $N_W\cdot [\cO(-d)]$ as a $\ZZ$-linear combination of the classes $[\cO(-w_i)]$.  There is a unique $j$ such that $w_j<d<w_{j+1}$ and thus ${\bf e} := (w_0,w_1,\dots,w_j,d,w_{j+1},\dots,w_n)\in \ZZ^{n+2}$ is a degree sequence.  Using the construction of the EFW complex in the previous section, there is a pure resolution 
\begin{align*}
F_\bullet = [S(-w_0)^{b_0}\gets \dots\gets S(-w_j)^{b_j} \gets S(-d)^{b_{j+1}} \gets S(-w_{j+1})^{b_{j+2}}\gets \dots \gets S(-w_{n+1})^{b_{n+2}}\gets 0]
\end{align*}
of type ${\bf e}$ that resolves a finite length module. The corresponding complex of sheaves $\widetilde{F}_\bullet$ on $\PP^n$ is thus exact. From \eqref{eqn:lambda i}, we see that $b_{j+1} = \dim \bS_{\mu(W)}(\bk^{n+1})$, so $b_{j+1} = N_W$. It follows that $N_W\cdot [\cO(-d)]$ can be written as a $\ZZ$-linear combination of the classes $[\cO(-w_i)]$ in $\rK_0(\PP^n)$.
\end{remark}

\section{Properties of $\cU_W$ and the proof of Corollary~\ref{cor:categorification}} \label{sec:UW}

Recall that we use $\cU_W$ to denote the sheaf resolved by $\bE_W$.  Given a finite map $f \colon X\to \PP^n$ we say that a coherent sheaf $\cF$ on $X$ is an \defi{Ulrich sheaf for $f$} if $f_*\cF\cong \cO_{\PP^n}^N$ for some $N$.  

\begin{defn}
Let $\Delta_W$ be the scheme-theoretic support of $\cU_W$.
\end{defn}

\begin{prop}\label{prop:ulrich}
For $i=0,\dots,n$, $\cU_W(w_i,-w_i)$ is an Ulrich sheaf for $p_1\colon \Delta_W\to \PP^n$. More precisely, $p_{1*} (\cU_W(w_i,-w_i)) \cong \cO_{\PP^n}^{N_W}$.
\end{prop}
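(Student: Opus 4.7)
The plan is to compute $p_{1*}(\cU_W(w_i,-w_i))$ by replacing $\cU_W$ with its locally free resolution $\bE_W$ and applying the Leray/hypercohomology spectral sequence for $Rp_{1*}$. Concretely, twisting the resolution of $\cU_W$ by $\cO(w_i) \boxtimes \cO(-w_i)$ yields an acyclic complex
\[
\cO(w_i-w_0)\boxtimes \cE_0(-w_i) \gets \cO(w_i-w_1)\boxtimes \cE_1(-w_i) \gets \cdots \gets \cO(w_i-w_n)\boxtimes \cE_n(-w_i) \gets 0
\]
that resolves $\cU_W(w_i,-w_i)$. Hence $Rp_{1*}(\cU_W(w_i,-w_i))$ is quasi-isomorphic to $Rp_{1*}$ applied to this complex and can be computed by a spectral sequence whose $E_1$ page is the termwise derived pushforward.

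First I would compute those termwise pushforwards by the projection formula combined with flat base change along $p_2$: for each $j$,
\[
R^\ell p_{1*}\bigl(\cO(w_i-w_j)\boxtimes \cE_j(-w_i)\bigr) \;\cong\; \cO(w_i-w_j) \otimes \rH^\ell\bigl(\PP(V), \cE_j(-w_i)\bigr).
\]
Next I invoke the orthogonality established in Theorem~\ref{thm:main comparison}\eqref{Orthogonal}: the cohomology group $\rH^\ell(\PP(V), \cE_j(-w_i))$ vanishes unless $\ell = j = i$, in which case it equals $\bk^{N_W}$. Thus on the $E_1$ page exactly one term survives, namely the contribution of $j=i$ in cohomological degree $\ell = i$, which is the trivial bundle $\cO^{N_W}$ on $\PP^n$.

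With only a single nonzero entry, the spectral sequence degenerates and there are no extension issues. Taking into account that the $j$-th term of the resolution sits in cohomological degree $-j$, this unique entry contributes to the total degree $-i + i = 0$ piece of $Rp_{1*}(\cU_W(w_i,-w_i))$, so $p_{1*}(\cU_W(w_i,-w_i)) \cong \cO_{\PP^n}^{N_W}$ and the higher derived pushforwards vanish. (The vanishing of higher pushforwards is also automatic from the fact that $\Delta_W$, being a nilpotent thickening of $\Delta_{\PP(V)}$, is finite over $\PP^n$ via $p_1$; this can serve as a sanity check.)

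The only real content is the orthogonality input, which has already been proven, so I do not anticipate a serious obstacle; the main thing to be careful about is the indexing convention that makes the unique surviving term land in total degree $0$, together with the verification that the Künneth/projection formula applies to each term $\cO(a)\boxtimes \cE_j(-w_i)$, which is immediate since $\cE_j(-w_i)$ is a locally free sheaf on $\PP(V)$.
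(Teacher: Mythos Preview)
Your proposal is correct and follows essentially the same argument as the paper: both compute $Rp_{1*}$ via the hypercohomology spectral sequence of the twisted resolution $\bE_W(w_i,-w_i)$, invoke the orthogonality of Theorem~\ref{thm:main comparison}\eqref{Orthogonal} to kill all but the single term $\cO\otimes \rH^i(\PP^n,\cE_i(-w_i))\cong \cO_{\PP^n}^{N_W}$, and conclude by degeneration. The paper leads with the affineness of $p_1|_{\Delta_W}$ to dispose of higher pushforwards up front, whereas you note it as a sanity check at the end, but the substance is identical.
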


\begin{proof}
Since $p_1|_{\Delta_W}$ is affine, we have $\rR^k p_{1*} (\cU_W(w_i,-w_i))=0$ for $k>0$.  
Also, since $-w_i$ is a root of each supernatural bundle $\cE_j$ for $j \ne i$, the hypercohomology spectral sequence for computing $\rR p_{1*} (\cU_W(w_i,-w_i)) = \rR p_{1*}(\bE_W(w_i,-w_i))$ consists of a single term $\rE^1_{-i,i}=\cO \boxtimes \rH^i(\PP^n,\cE_i(-w_i)) \cong \cO_{\PP^n}^{N_W}$ (the isomorphism follows from Theorem~\ref{thm:main comparison}(2)).   Hence the spectral sequence immediately degenerates yielding $p_{1*} (\cU_W(w_i,-w_i)) \cong \cO_{\PP^n}^{N_W}$.
\end{proof}

\begin{proof}[Proof of Corollary~\ref{cor:categorification}]
We first prove part (3).  For $i=1$ we have
\[
p_2^*\cF \otimes \bE_W = [\cO(-w_0)\boxtimes \left(\cE_0\otimes \cF\right) \gets \cdots \gets  \cO(-w_n)\boxtimes \left(\cE_n\otimes \cF\right) \gets 0].
\]
To compute $\Phi_1(\cF)$, we apply $\rR p_{1*}$ to this complex.  The hypercohomology spectral sequence for $\rR p_{1*}$ has the form:
\[
\rE^1_{p,q} = \rR^{-p}p_{1*} \left( \cO(-w_q)\boxtimes \cE_q\otimes \cF\right) \Longrightarrow \Phi^W_1(\cF).
\]
Using the fact that $\rR^{-p}p_{1*} \left( \cO(-w_q)\boxtimes \cE_q\otimes \cF\right)\cong \cO(-w_q)\otimes \rH^{-p}(\PP^n, \cE_q\otimes \cF)$, we obtain the statement for $i=1$.  The proof for $i=2$ is similar.

We next prove part (1).  It suffices to prove the statement for $\rK_0(\PP^n)_{\QQ}$.  Proposition~\ref{prop:ulrich} shows that $\Phi^W_1(\cO(-w_i)) = \cO(-w_i)^{N_W}$ for $i=0,\dots,n$, so we see that $\rK_0(\Phi^W_1)$ acts as multiplication by $N_W$ on each $[\cO(-w_i)]$.  By Theorem~\ref{thm:main comparison}(1), the classes of the $\cO(-w_i)$ form a basis of $\rK_0(\PP^n)_{\QQ}$, and it follows that $\Phi^W_1$ acts as multiplication by $N_W$ on all of $\rK_0(\PP^n)_{\QQ}$.

For $\Phi^W_2$, we first observe that the $\widehat{\rE}^1$-page of the spectral sequence for $\Phi^W_2(\cE_i)$ has a single nonzero term $\widehat{\rE}^1_{-i,i}=\rH^i(\PP^n, \cE_i(-w_i)) \otimes \cE_i $.  By Theorem~\ref{thm:main comparison}(2), this is isomorphic to $\cE_i^{\oplus N_W}$, and thus $[\Phi^W_2(\cE_i)]=N_W\cdot [\cE_i]$ for $i=0,1,\dots, n$.  Since the $\cE_i$ form a basis of $\rK_0(\PP^n)_{\QQ}$ by Theorem~\ref{thm:main comparison}(2), it follows that $\Phi^W_2$ acts as multiplication by $N_W$ on all of $\rK_0(\PP^n)_{\QQ}$.

For part (2), we observe that since $W$ and $W^\perp$ are bases of $\rK_0(\PP^n)_{\QQ}$, the coefficients will be unique.  By part (1) of this theorem it suffices to write down an expression for the class of $\Phi^W_i(\cF)$.  In $\rK_0(\PP^n)$, the alternating sum of the terms in a spectral sequence is invariant under turning the page of a spectral sequence.  Thus, since the spectral sequence $\rE^1_{p,q}$ abuts to $\Phi_1^W(\cF)$, we have
\begin{align*}
[\Phi_1^W(\cF)] &= \sum_{p,q} (-1)^{p+q} [\rE^1_{p,q}] \\
&= \sum_{q=0}^n \sum_{p=-n}^0 (-1)^{p+q} \rh^{-p}(\PP^n,\cE_q\otimes \cF)\cdot [\cO(-w_q)] \\
&= \sum_{q=0}^n \chi(\PP^n,\cE_q\otimes \cF)\cdot [\cO(-w_q)].
\end{align*}
Since $[\Phi_1^W(\cF)] = \frac{1}{N_W}\cdot [\cF]$, this yields the statement for $W$.  The statement for $W^\perp$ is similar.
\end{proof}

\begin{example}\label{ex:023 line}
Returning to Example~\ref{ex:023}, let $L$ be a line in $\PP^2$.  Then in $\rK_0(\PP^2)_{\QQ}$ we have
\[
[\cO_{L}] = \tfrac{2}{3}[\cO_{\PP^2}] - [\cO_{\PP^2}(-2)]+\tfrac{1}{3}[\cO_{\PP^2}(-3)].
\]
Applying the Fourier--Mukai transform $\Phi^W_1$ categorifies this decomposition, as if we use the spectral sequence $\rE^1_{p,q}$ from Corollary~\ref{cor:categorification}, then we obtain an exact sequence:
\[
0\gets \Phi^W_1(\cO_L) \gets \cO^2 \gets \cO(-2)^3 \gets \cO(-3)\gets 0.
\]

Using $W^\perp$ on the other hand, we have the decomposition
\[
[\cO_{L}] = \tfrac{1}{3}[\cE_0]+\tfrac{1}{3}[\cE_1] - \tfrac{2}{3}[\cE_2],
\]
and a direct computation using the other spectral sequence (plus the fact that $\Ext^1(\cE_1,\cE_0)=\rH^1(\Sym^2\cQ)=0$) yields an exact sequence:
\[
0\gets \Phi^W_2(\cO_L) \gets \cE_0\oplus \cE_1\gets\cE_2^2\gets 0. \qedhere
\]
\end{example}

\begin{prop}
Let $\cF$ be a coherent sheaf on $\PP^n$. Then for any $W$ and $i \in \{1,2\}$, $\Phi_i^W(\cF)$ is a coherent sheaf concentrated in cohomological degree $0$.
\end{prop}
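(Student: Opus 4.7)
The plan is to write $\Phi_1^W(\cF) = \rR p_{1*}(p_2^* \cF \otimes^L \cU_W)$, using the locally free resolution $\bE_W$ of $\cU_W$, and then to establish two concentration statements: first, that $p_2^* \cF \otimes^L \cU_W$ is quasi-isomorphic to an honest coherent sheaf in cohomological degree zero; second, that the higher direct images $\rR^k p_{1*}$ of that sheaf vanish for $k > 0$. Together these give that $\Phi_1^W(\cF)$ sits in degree zero; the case of $\Phi_2^W$ is symmetric after interchanging the roles of $p_1$ and $p_2$.

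For the first concentration, I will show that $\cU_W$ is flat as a module over $p_2^{-1}(\cO_{\PP^n})$. The filtration of $\cU_W$ established in the proof of Theorem~\ref{thm:main comparison} has associated graded a vector bundle on the diagonal $\Delta \cong \PP^n$, so each graded piece is of the form $\Delta_* \cV_j$ for some vector bundle $\cV_j$ on $\PP^n$. Since $p_2 \circ \Delta = \id_{\PP^n}$, the stalk of $\Delta_* \cV_j$ at a diagonal point $(y,y)$ is $\cV_{j,y}$ with $\cO_{\PP^n, y}$ acting through the identity map, so $\Delta_* \cV_j$ is flat over $\PP^n$ via $p_2$. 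Since extensions of flat modules are flat, $\cU_W$ inherits flatness. Consequently the functor $\cG \mapsto p_2^* \cG \otimes_{\cO_X} \cU_W$ is exact; applied to any finite locally free resolution $\cL_\bullet \to \cF$ on $\PP^n$ (which exists by Hilbert's syzygy theorem), it shows that $p_2^* \cL_\bullet \otimes_{\cO_X} \cU_W$ is a resolution of the sheaf $p_2^* \cF \otimes_{\cO_X} \cU_W$ computing the derived tensor, which therefore sits in degree zero.

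For the second concentration, $p_2^* \cF \otimes \cU_W$ is supported on $\Delta_W$, and the composite $\Delta_W \hookrightarrow \PP^n \times \PP^n \xrightarrow{p_1} \PP^n$ is finite because the underlying reduced scheme of $\Delta_W$ is the diagonal, which maps isomorphically to $\PP^n$ via $p_1$. Higher direct images along a finite morphism vanish for coherent sheaves, so $\rR^k p_{1*}(p_2^*\cF \otimes \cU_W) = 0$ for $k > 0$, giving $\Phi_1^W(\cF) = p_{1*}(p_2^*\cF \otimes \cU_W)$ as a coherent sheaf in degree zero. I expect the flatness step to be the main obstacle: while flatness of $\cU_W$ over $\PP^n$ via $p_1$ can alternatively be extracted directly from the Ulrich property in Proposition~\ref{prop:ulrich}, handling both Fourier--Mukai transforms uniformly requires the filtration argument above.
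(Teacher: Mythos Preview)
Your proof is correct and uses the same structural input as the paper --- the filtration of $\cU_W$ whose associated graded is a vector bundle on the diagonal --- but packages the argument a bit differently. The paper runs a single d\'evissage in the derived category: for each subquotient $\iota_* M$ with $M$ a vector bundle on $\Delta_{\PP^n}$, it computes directly via the projection formula that $\rR p_{1*}(\rL p_2^*\cF \otimes^{\rL} \iota_* M) \cong \cF \otimes M$, and then concludes by walking up the filtration using exact triangles. You instead separate the two potential failures of concentration and kill them independently: the derived tensor product is concentrated in degree $0$ because the filtration exhibits $\cU_W$ as $p_2$-flat, and the derived pushforward is concentrated in degree $0$ because $p_1|_{\Delta_W}$ is finite. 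Both arguments are short; yours has the minor advantage of isolating the exact property of $\cU_W$ responsible for each vanishing, while the paper's is slightly more economical in that it never names flatness or finiteness explicitly.
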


\begin{proof}
We have $\Phi_1^W(\cF) = \rR p_{1*}( {\rm L} p_2^* \cF \otimes^{\rm L} \cU_W)$. Note that if we have a short exact sequence $0 \to M \to \cU_W \to \cU_W/M$, then we get an exact triangle
\[
\rR p_{1*}( {\rm L} p_2^* \cF \otimes^{\rm L} M) \to \rR p_{1*}( {\rm L} p_2^* \cF \otimes^{\rm L} \cU_W) \to \rR p_{1*}( {\rm L} p_2^* \cF \otimes^{\rm L} \cU_W/M) \to.
\]
So to show that the middle term is a coherent sheaf in cohomological degree $0$, it suffices to prove this for the outer two terms. Since $\cU_W$ has a filtration whose associated graded is a vector bundle on the diagonal $\Delta_{\PP^n}$, it suffices to handle that case. So suppose $M$ is a vector bundle on $\Delta_{\PP^n}$ and let $\iota \colon \PP^n \to \Delta_{\PP^n}$ be the isomorphism $x \mapsto (x,x)$. Then we can identify $M$ with a vector bundle on $\PP^n$ and we write $\iota_* M$ in place of $M$. Now we have
\begin{align*}
\rR p_{1*} ( {\rm L} p_2^* \cF \otimes^{\rm L} \iota_* M) 
&= \rR p_{1*} \rR \iota_* ( {\rm L} \iota^* {\rm L} p_2^* \cF \otimes^{\rm L} M) &\text{(projection formula)}\\
&= \cF \otimes^{\rm L} M & (p_1 \circ \iota = p_2 \circ \iota = {\rm id})\\
&= \cF \otimes M & \text{($M$ is flat)}
\end{align*}
and so we conclude that $\Phi_1^W(\cF)$ is a coherent sheaf in cohomological degree $0$. The proof for $\Phi_2^W(\cF)$ is exactly the same.
\end{proof}

%%%%%%%%%%%%%%%%%%%%%%%%%%%%%%
%%%%%%%%%%%%%%%%%%%%%%%%%%%%%%
\section{The effect of $\Phi^W_2$ on cohomology tables}
%%%%%%%%%%%%%%%%%%%%%%%%%%%%%%
%%%%%%%%%%%%%%%%%%%%%%%%%%%%%%

In the situation of Theorem~\ref{thm:categorified special case}, we need to prove that $\gamma(\Phi^W_2(\cF)) = N_W \gamma(\cF)$. Although $\Phi^W_2$ acts as scalar multiplication in the Grothendieck group, it is not true that it always acts as scalar multiplication on cohomology tables; see Example~\ref{ex:023 and Ulrich}. However, we do have the following semicontinuity statement in general.

\begin{prop}\label{prop:semicont}
Let $\cF$ be any coherent sheaf on $\PP^n$. Then for all $i$ and all $d$, we have
\[
\rh^i(\PP^n, \Phi^W_2(\cF)(d)) \geq N_W\cdot \rh^i(\PP^n, \cF(d)).
\]
In the following $3$ cases, the above is an equality:
\begin{enumerate}[\indent \rm (1)]
\item $d \in \{-w_0, \dots, -w_n\}$,
\item $d > -w_0$, or
\item $d < -w_n$.
\end{enumerate}
\end{prop}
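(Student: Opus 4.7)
The plan is to use the hypercohomology spectral sequence arising from the resolution $\bE_W \to \cU_W$, applied to $p_1^*\cF \otimes p_2^*\cO(d)$. By K\"unneth, its $E_1$-page reads
\[
E_1^{-j, q} \;=\; \bigoplus_{a+b=q} \rH^a(\PP^n, \cF(-w_j)) \otimes \rH^b(\PP^n, \cE_j(d)) \;\Longrightarrow\; \rH^{q-j}(\Phi^W_2(\cF)(d)).
\]
The supernatural property of $\cE_j$ tells us that $\rH^b(\cE_j(d))$ is nonzero in at most one degree $b_j(d)$, and vanishes completely precisely when $d$ is one of the roots $\{-w_k : k\neq j\}$ of $\cE_j$.

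First I would handle case (1). If $d = -w_k$, then $-w_k$ is a root of every $\cE_j$ with $j\neq k$, so only the $j = k$ column of the $E_1$-page survives. By Theorem~\ref{thm:bott}, $\rH^k(\cE_k(-w_k)) = \bk^{N_W}$, giving $E_1^{-k,q} = \rH^{q-k}(\cF(-w_k))^{\oplus N_W}$. Having only one nonzero column, no differential can act; the spectral sequence degenerates at $E_1$, and one reads off $\rh^i(\Phi^W_2(\cF)(-w_k)) = N_W\rh^i(\cF(-w_k))$, settling both the inequality and the equality in this case.

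For cases (2) and (3), the cohomology of every $\cE_j(d)$ concentrates in a single common degree ($b=0$ when $d > -w_0$, $b=n$ when $d < -w_n$), so the $E_1$-page has nonzero entries only in one row. My plan is to apply the projection formula to rewrite
\[
\rh^i(\Phi^W_2(\cF)(d)) \;=\; \rh^i\bigl(\PP^n,\; \cF\otimes \cV_d\bigr),\qquad \cV_d \;:=\; \Phi^W_1(\cO(d)) \;=\; p_{1*}(\cU_W\otimes p_2^*\cO(d)),
\]
the higher $p_{1*}$ vanishing by the affineness argument used in the proof of Proposition~\ref{prop:ulrich}. Combining (a) the pure resolution of $\cV_d$ of type $(w_0,\ldots,w_n)$ supplied by Corollary~\ref{cor:pure resolutions}, (b) the $K$-theoretic identity $[\cV_d]=N_W[\cO(d)]$ coming from Corollary~\ref{cor:categorification}, and (c) $\GL(V)$-equivariance together with the Ulrich equalities $\cV_{-w_i}=\cO(-w_i)^{N_W}$ from Proposition~\ref{prop:ulrich}, I would argue that in these ranges of $d$ the sheaf $\cV_d$ is cohomologically equivalent to $\cO(d)^{N_W}$, in the precise sense that $\rh^i(\cF\otimes\cV_d) = N_W\rh^i(\cF(d))$ for every coherent $\cF$. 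Case (3) follows from the dual argument, Serre-dualizing the degree $b=n$ row to a degree $0$ row.

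For the general inequality, when $d \in (-w_k,-w_{k-1})$ the $E_1$-page has nonzero entries only in the two rows $b = k-1$ (for columns $j<k$) and $b=k$ (for columns $j\geq k$). The plan is to exhibit a canonical subquotient of $E_\infty$ of total dimension at least $N_W\rh^i(\cF(d))$ in each cohomological degree: the Euler characteristic identity $\chi(\Phi^W_2(\cF)(d))=N_W\chi(\cF(d))$ from Corollary~\ref{cor:categorification} pins down the alternating sum, while the constrained geometry of the $d_r$-differentials between the two rows limits the amount of cancellation that can occur.

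The main obstacle will be this last step --- proving the general inequality by tracking which $E_\infty$-terms must survive. The main tools at my disposal are the $\GL$-equivariant structure, the supernatural vanishing patterns of the $\cE_j$, the pure resolution of $\cV_d$, and the $K$-theoretic identity $[\cV_d]=N_W[\cO(d)]$.
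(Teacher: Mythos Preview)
Your case~(1) argument is correct, and your reduction $\rh^i(\Phi^W_2(\cF)(d)) = \rh^i(\cF\otimes\cV_d)$ with $\cV_d=\Phi^W_1(\cO(d))$ is exactly the paper's first step. After that the plan has a genuine gap.

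For cases~(2) and~(3), the three ingredients you list --- a pure resolution of $\cV_d$, the identity $[\cV_d]=N_W[\cO(d)]$, and equivariance together with the Ulrich equalities --- do not by themselves force $\cV_d\cong\cO(d)^{N_W}$, nor even the weaker statement $\rh^i(\cF\otimes\cV_d)=N_W\,\rh^i(\cF(d))$ for arbitrary $\cF$: there are many homogeneous bundles with a prescribed $K$-class and a pure resolution of prescribed shape. The paper instead identifies the one-row complex computing $\cV_d$ \emph{explicitly} with a truncation of an EFW complex of length $n+1$ whose term in the middle position is $\bS_{\mu(W)}(V)\otimes\cO(d)$ (this identification uses Borel--Weil--Bott plus uniqueness of the equivariant Pieri maps). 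When $d>-w_0$ or $d<-w_n$ one of the two truncations is empty, and the EFW complex immediately yields $\cV_d\cong\cO(d)^{N_W}$.

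The more serious gap is the general inequality. Your plan to bound the surviving $E_\infty$-terms by constraining cancellation between the two rows has no mechanism beyond the Euler-characteristic identity, and that controls only the alternating sum --- it cannot force each individual $\rh^i$ to be at least $N_W\,\rh^i(\cF(d))$. The paper does not analyze those spectral-sequence differentials at all. Having already reduced to $\rh^i(\cF\otimes\cV_d)$, it shows (Lemma~\ref{lem: extension}) that $\cV_d$ lies in a flat family over $\AA^1$ with general fiber $\cO(d)^{N_W}$, so the inequality is ordinary semicontinuity of cohomology in a flat family. The key to building that family is the EFW identification above: it exhibits an extension
\[
0\to\coker B\to\bS_{\mu(W)}(V)\otimes\cO(d)\to\ker A\to 0,
\]
while the two-row spectral sequence presents $\cV_d$ as the \emph{reversed} extension
\[
0\to\ker A\to\cV_d\to\coker B\to 0.
\]
A separate equivariant $\Ext^1$-vanishing result (Lemma~\ref{lem:ext-vanishing}) shows this reversed extension splits $\GL(V)$-equivariantly, so $\cV_d\cong\ker A\oplus\coker B$ is the special fiber and $\cO(d)^{N_W}$ the general fiber of a one-parameter family of extensions. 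This degeneration-plus-semicontinuity idea is the missing ingredient in your proposal.
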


A semicontinuity result is the best we can hope for in full generality, in light of examples like the following.

\begin{example}\label{ex:023 and Ulrich}
Continue with the notation of Example~\ref{ex:023}. Using the spectral sequence $\widehat{\rE}^1_{p,q}$  from Corollary~\ref{cor:categorification} to compute $\rR p_{2*}\mathcal U_W=p_{2*}\mathcal U_W$, we get an extension:
\[
0 \to \cO(1) \to p_{2*} \cU_W \to \Omega^1(1) \to 0.
\]
But $\Ext^1(\Omega^1(1),\cO(1))=\rH^1(\PP^2, (\Omega^1)^*)=0$ and so $p_{2*} \mathcal U_W=\Omega^1(1)\oplus \cO(1)$.  This has the same Hilbert polynomial as $\cO^3$, but $\mathcal U_W$ is not an Ulrich sheaf for $p_2$.  Moreover, the cohomology table of $\Phi^W_2(\cO)$ does not equal a scalar multiple of the cohomology table of $\cO^3$.
\end{example}

Our proof of Proposition~\ref{prop:semicont} relies on a more detailed analysis of the equivariant extensions---or lack thereof---between certain equivariant bundles.  We motivate this discussion with an example.

\begin{example}
Recall our notation for the tautological exact sequence $0 \to \cO(-1) \to V \otimes \cO \to \cQ \to 0$. The equivariant bundle $\Sym^2(V) \otimes \cO_{\PP(V)}$ has a filtration whose quotients are $\cO(-2)$, $\cO(-1) \otimes \cQ$, and $\Sym^2(\cQ)$. There are two proper equivariant subbundles: $\cO(-2)$ and the kernel $\cE$ of the surjection $\Sym^2(V) \to \Sym^2(\cQ)$.

We first consider the subbundle $\cO(-2)$, which determines an exact sequence:
\[
0\to \cO(-2)\to \Sym^2(V) \otimes \cO_{\PP(V)} \to \cW\to 0.
\]
We claim that if you reverse the roles of the subbundle and the quotient bundle in the above extension, then there will be no nontrivial extensions.  In other words, we claim that $\Ext^1(\cO(-2),\cW)=0$; this follows by considering the long exact sequence:
\[
\cdots \to \Ext^1(\cO(-2), \Sym^2(V)) \to \Ext^1(\cO(-2), \cW) \to \Ext^2(\cO(-2), \cO(-2)) \to \cdots,
\]
since the outer two terms are zero. 

If we consider the other subbundle $\cE$ then we also have an exact sequence
\[
0\to \cE\to  \Sym^2(V) \otimes \cO_{\PP(V)} \to \Sym^2(\cQ)\to 0.
\]
If we reverse the roles of the subbundle and the quotient bundle, then we get $\Ext^1(\cE, \Sym^2(\cQ)) = 0$, which follows by a similar argument.
\end{example}

The splitting observed in the above example holds in general, at least if we restrict our attention to equivariant extensions.

\begin{lemma} \label{lem:ext-vanishing}
Pick a partition $\lambda$. Let $\cE \subset \bS_\lambda(V) \otimes \cO_{\PP(V)}$ be an equivariant subbundle and let $\cF$ be the quotient bundle. Every $\GL(V)$-equivariant extension
\[
0\to \cF \to \cU \to \cE\to 0
\]
splits, and thus  $\Ext^1_{\PP(V)}(\cE, \cF)^{\GL(V)} = 0$.
\end{lemma}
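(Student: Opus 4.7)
The strategy is to recognize $\GL(V)$-equivariant extensions on $\PP(V) = \GL(V)/P$ (where $P$ is the stabilizer of the line $\ell \subset V$) as extensions in the category of $P$-representations.  Under the equivalence of $\GL(V)$-equivariant coherent sheaves on $\PP(V)$ with coherent $P$-representations (taking fiber at the basepoint), the trivial bundle $\bS_\lambda(V) \otimes \cO_{\PP(V)}$ corresponds to $\bS_\lambda(V)$ with its restricted $P$-action, the equivariant subbundle $\cE$ corresponds to a $P$-subrepresentation $M \subset \bS_\lambda(V)$, and $\cF$ corresponds to $N := \bS_\lambda(V)/M$.  Since $\GL(V)$-invariants of sheaf Ext are exact in characteristic zero by reductivity, $\Ext^1_{\PP(V)}(\cE, \cF)^{\GL(V)} \cong \Ext^1_P(M, N)$, reducing the lemma to showing $\Ext^1_P(M, N) = 0$.

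Writing $P = L \ltimes U$ with Levi $L = \GL(\ell) \times \GL(V/\ell)$ and abelian unipotent radical with Lie algebra $\mathfrak{u} \cong \Hom(V/\ell, \ell)$, the Hochschild--Serre spectral sequence combined with the vanishing of higher cohomology for the reductive group $L$ in characteristic zero identifies
\[
\Ext^1_P(M, N) = H^1(\mathfrak{u}, \Hom_\bk(M, N))^L,
\]
computed by $L$-invariants of the Chevalley--Eilenberg complex for the abelian Lie algebra $\mathfrak{u}$.

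The main step is a weight analysis under the central torus $Z(L)$ acting by scalars on $\ell$.  The representation $\bS_\lambda(V)$ decomposes into $Z(L)$-weight spaces $(\bS_\lambda V)_k$ for $k = 0, 1, \ldots, |\lambda|$, indexed by the number of $\ell$-factors, with $\mathfrak{u}$ of weight $+1$ satisfying $\mathfrak{u} \cdot (\bS_\lambda V)_k \subseteq (\bS_\lambda V)_{k+1}$; the $P$-stability of $M$ gives $\mathfrak{u} \cdot M_k \subseteq M_{k+1}$.  An $L$-invariant $1$-cocycle corresponds to an $L$-equivariant map $\mathfrak{u} \to \Hom_\bk(M, N)$, and such a map can only have a nonzero component $M_k \to N_{k+1}$ when the $\GL(V/\ell)$-types of $M_k$ and $N_{k+1}$ combine with $(V/\ell)^*$ (the $\GL(V/\ell)$-part of $\mathfrak{u}$) to contain the trivial representation.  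I would then show that any such cocycle is a coboundary by constructing an explicit $h \in \Hom_\bk(M, N)^L$ obtained by ``inverting'' the $\mathfrak{u}$-action on the relevant weight components; the key technical input is that $\mathfrak{u}$ acts surjectively between consecutive weight spaces $(\bS_\lambda V)_k \twoheadrightarrow (\bS_\lambda V)_{k+1}$, which follows from the Pieri rule applied to $V = \ell \oplus (V/\ell)$.

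The main obstacle is the cocycle-to-coboundary analysis in the last step, particularly when $\bS_\lambda(V)$ has multiple $L$-isotypic components at the same $Z(L)$-weight (which happens for sufficiently complicated $\lambda$).  In that case, a raw weight count is insufficient: the construction of the coboundary $h$ must canonically respect the partition of the $L$-isotypic components of $\bS_\lambda(V)$ between $M$ and $N$, and one must exploit the precise way $\mathfrak{u}$ connects these components via the Littlewood--Richardson decomposition of $\bS_\lambda(V)$ in order to produce the required element of $\Hom_\bk(M, N)^L$.
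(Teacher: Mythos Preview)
Your reduction to extensions of $P$-representations (equivalently, $\fp$-modules) is correct and is exactly how the paper begins.  The divergence is in the final step: you aim to show that every $L$-invariant $1$-cocycle is a coboundary by constructing an explicit $h$, and you flag this construction as an unresolved obstacle.  In fact no such construction is needed, because every $L$-invariant $1$-cocycle is already \emph{zero}, and this is what the paper proves.

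Write $W := V/\ell$ and $\mathfrak{u} = \ell \otimes W^*$.  The two structural facts you are missing are:
\begin{enumerate}
\item The branching $\bS_\lambda(V)\big|_{\GL(\ell)\times\GL(W)} \cong \bigoplus_{\mu}\, \ell^{\,|\lambda|-|\mu|}\otimes \bS_\mu(W)$, summed over $\mu$ interleaving $\lambda$, is \emph{multiplicity-free}.  Hence the $\fp$-submodule $M$ is determined by a subset $S$ of these $\mu$, and $M$ and $N$ share no $L$-isotype.
\item Because $\mathfrak{u}\cdot\bS_\mu(W)$ maps into $\bigoplus_{\nu}\bS_\nu(W)$ with $\nu\subset\mu$, $|\nu|=|\mu|-1$, the set $S$ is \emph{downward closed} under containment of partitions.  (The paper deduces this from the description of $\bS_\lambda(V)$ as the graded dual of an EFW cokernel, citing \cite[Lemma~1.6]{sam-weyman}.)
\end{enumerate}
Now an $L$-equivariant map $\mathfrak{u}\otimes M\to N$ decomposes into components $W^*\otimes\bS_\mu(W)\to\bS_\nu(W)$ with $\mu\in S$, $\nu\notin S$; by Pieri such a component is nonzero only if $\nu\subset\mu$, which forces $\nu\in S$, a contradiction.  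So $\Hom_L(\mathfrak{u}\otimes M,N)=0$, and in particular every $L$-invariant cocycle vanishes.  The paper phrases the same conclusion as: any Levi-equivariant splitting $s\colon E\to\eta$ of an extension $0\to F\to\eta\to E\to 0$ is automatically $\fp$-equivariant, since $\mathfrak{u}\cdot s(E)$ has no $F$-component.

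Your worry about ``multiple $L$-isotypic components at the same $Z(L)$-weight'' is a red herring: distinct $\mu$ with equal $|\mu|$ do occur, but they are different $\GL(W)$-types and never interact; the genuinely relevant fact is multiplicity-freeness of each individual $\bS_\mu(W)$, which you did not invoke.  The surjectivity of $\mathfrak{u}$ between consecutive weight spaces that you cite is true but plays no role.
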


\begin{proof}
Pick a direct sum decomposition $V = L \oplus W$ where $\dim L = 1$. Let $\fp$ be the Lie algebra of the stabilizer subgroup in $\GL(V)$ of the point $[L] \in \PP(V)$, so $\fp \cong (\fgl(W) \times \fgl(L)) \ltimes (L \otimes W^*)$. Let $E$ and $F$ be the fibers of $\cE$ and $\cF$ over $[L]$. The assignment $\cE \mapsto E$ is an equivalence between the category of homogeneous bundles on $\PP(V)$ and the category of $\fp$-modules. It then suffices to show that $\Ext^1_\fp(E, F) = 0$. Furthermore, a $\fp$-module is the same as a $\Sym(L \otimes W^*)$-module with a compatible action of $\fgl(L) \times \fgl(W)$. We ignore $\fgl(L)$ since it only keeps track of the grading.

By $\fgl(W)$-equivariance, both $E$ and $F$ are a direct sum of Schur functors on $W$. Furthermore, $\bS_\lambda(V)$ has the property that the submodule generated by any $\bS_\mu(W)$ contains all $\bS_\nu(W)$ where $\nu \subseteq \mu$: $\bS_\lambda(V)$ is the graded dual (as a $\Sym(W)$-module) of the cokernel of an EFW complex over $\Sym(W)$; this cokernel is the quotient of a module of the form $\Sym(W) \otimes \bS_\alpha(W)$, and these modules have this property \cite[Lemma 1.6]{sam-weyman}.

In particular, $E$ has the same property, and so there is no nonzero equivariant map $E \otimes \Sym(L \otimes W^*) \to F$. We mentioned above that a $\fp$-module is the same as a $\Sym(L \otimes W^*)$ with a compatible action of $\fgl(L) \times \fgl(W)$, and so this means that given any  equivariant extension of $\fp$-modules of the form $0 \to F \to \eta \to E \to 0$, we can split it as a $\fp$-module by splitting it as a sequence of $\fgl(L) \times \fgl(W)$-modules since the product of any $\fgl(L) \times \fgl(W)$-equivariant lifting of $E$ with the ideal generated by $L \otimes W^*$ cannot intersect $F$ (splitting as $\fgl(L) \times \fgl(W)$-modules is possible since $\fgl(L) \times \fgl(W)$ is a reductive Lie algebra).
\end{proof}

\begin{lemma}\label{lem: extension}
Continue with the notation of Theorem~\ref{thm:main comparison}.
Fix any $d\in \ZZ$.  There is a sheaf $\mathcal G$ on $\PP^n\times \AA^1$, flat over $\AA^1$, such that
\[
\cG_x \cong \begin{cases}
\Phi_1^W(\cO(d)) & \text{if } x=0\in \AA^1,\\
\cO(d)^{N_W} & \text{if } x \in \AA^1 \setminus \{0\}.
\end{cases}
\]
Furthermore, $\Phi_1^W(\cO(d)) \cong \cO(d)^{N_W}$ if either: $d \in \{-w_0, \dots, -w_n\}$, $d > -w_0$, or $d < -w_n$.
\end{lemma}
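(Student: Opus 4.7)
The plan is to construct $\cG$ as the $p_1$-pushforward of a $\GL(V)$-equivariant flat deformation of $\cU_W$ on $\PP(V) \times \PP(V) \times \AA^1$ obtained by varying the section of $\cO(1) \boxtimes \cQ$ used in Section~3, and then to prove each of the three equality cases by matching the spectral sequence of Corollary~\ref{cor:categorification} with an EFW resolution. Concretely, I would deform the section $\id \in {\rm End}(V) = \rH^0(\cO(1) \boxtimes \cQ)$ along $\AA^1$: choose a nilpotent $\psi \in {\rm End}(V)$ (for instance $\psi = E_{12}$) and set $\psi_t := \id + t\psi$. Since $\det \psi_t = 1$ for all $t$, we have $\psi_t \in \GL(V)$ for every $t \in \AA^1$. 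Viewing $\psi_t$ as a section of $\cO(1) \boxtimes \cQ \boxtimes \cO_{\AA^1}$ on $\PP(V) \times \PP(V) \times \AA^1$, the construction of~\eqref{eqn:EFW} produces a relative complex $\bE_W^{\mathrm{fam}}$; the Eagon--Northcott codimension argument used to show that $\bE_W$ is acyclic applies fiberwise over $\AA^1$ (each $\psi_t$ cuts out a codimension-$n$ subscheme, either the diagonal when $t = 0$ or the graph $\Gamma_{\psi_t}$ when $t \neq 0$), so $\bE_W^{\mathrm{fam}}$ is acyclic and its cokernel $\cU_W^{\mathrm{fam}}$ is flat over $\AA^1$.

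Let $\pi \colon \PP(V) \times \PP(V) \times \AA^1 \to \PP(V) \times \AA^1$ be the projection forgetting the middle factor, and set $\cG := \pi_*(\cU_W^{\mathrm{fam}} \otimes p_2^* \cO(d))$. The support of $\cU_W^{\mathrm{fam}}$ is a family of thickened twisted diagonals, hence finite over $\PP(V) \times \AA^1$, so $\rR^i \pi_* = 0$ for $i > 0$; combined with the flatness of $\cU_W^{\mathrm{fam}}$ this shows $\cG$ is a coherent sheaf flat over $\AA^1$. At $t = 0$, $\psi_0 = \id$ gives $\cG_0 = p_{1*}(\cU_W \otimes p_2^* \cO(d)) = \Phi_1^W(\cO(d))$. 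For $t \ne 0$, $\cU_W^{\psi_t}$ is supported on the graph $\Gamma_{\psi_t}$ of $\psi_t \in \GL(V)$; since $p_1|_{\Gamma_{\psi_t}}$ is an isomorphism, the $\GL(V)$-equivariant argument of Proposition~\ref{prop:ulrich} gives $p_{1*} \cU_W^{\psi_t} \cong \cO^{N_W}$, and combined with $\psi_t^* \cO(d) \cong \cO(d)$ this yields $\cG_t \cong \cO(d)^{N_W}$.

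For the three equality cases: when $d = -w_j$, the spectral sequence of Corollary~\ref{cor:categorification} has a single nonzero entry $\rE^1_{-j, j} = \cO(-w_j) \otimes \rH^j(\cE_j(-w_j)) \cong \cO(-w_j)^{N_W}$ by Theorem~\ref{thm:main comparison}(\ref{Orthogonal}), so it immediately degenerates to $\Phi_1^W(\cO(-w_j)) \cong \cO(-w_j)^{N_W}$. When $d > -w_0$ or $d < -w_n$, all roots of every $\cE_q$ lie in $[-w_n, -w_0]$, so $\rH^i(\cE_q(d))$ is concentrated in a single degree ($i = 0$ or $i = n$, independent of $q$); the spectral sequence collapses to a single row, exhibiting $\Phi_1^W(\cO(d))$ as the cokernel (resp.\ kernel) of a $\GL(V)$-equivariant exact sequence built from the sheaves $\cO(-w_q) \otimes \rH^i(\cE_q(d))$. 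Separately, applying the EFW construction of~\eqref{eqn:EFW} to the extended degree sequence $(-d, w_0, \dots, w_n)$ (resp.\ $(w_0, \dots, w_n, -d)$) yields a $\GL(V)$-equivariant exact sequence of the same shape in which $\cO(d)^{N_W}$ appears in the extra slot---the multiplicity is $N_W$ by the same Schur-dimension computation as in the remark following the proof of Theorem~\ref{thm:main comparison}. By Borel--Weil--Bott (Theorem~\ref{thm:bott}), each $\rH^i(\cE_q(d))$ is precisely the Schur functor of $V^*$ appearing as the corresponding Betti coefficient in this EFW complex; the $\GL(V)$-equivariant differentials in both complexes are unique up to scalar by the multiplicity-freeness coming from Pieri's rule (as in the construction preceding~\eqref{eqn:EFW}). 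Hence the two exact sequences are isomorphic, giving $\Phi_1^W(\cO(d)) \cong \cO(d)^{N_W}$. The hardest part is this final matching: verifying that the Borel--Weil--Bott identification lines up exactly with the Schur functors in the EFW of the extended degree sequence, and then using multiplicity-freeness to pin down the differentials up to scalar.
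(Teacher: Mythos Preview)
Your construction of $\cG$ has a genuine gap: since $\psi_t = \id + t\psi$ lies in $\GL(V)$ for \emph{every} $t \in \AA^1$ (including $t = 0$), each fiber $\cU_W^{\psi_t}$ is the pullback of $\cU_W$ along the automorphism $(\id, [\psi_t])$ of $\PP(V) \times \PP(V)$. That automorphism commutes with $p_1$ and fixes the isomorphism class of $p_2^* \cO(d)$, so
\[
p_{1*}\bigl(\cU_W^{\psi_t} \otimes p_2^* \cO(d)\bigr) \;\cong\; p_{1*}\bigl(\cU_W \otimes p_2^* \cO(d)\bigr) \;=\; \Phi_1^W(\cO(d))
\]
for \emph{all} $t$: your family is constant. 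The appeal to Proposition~\ref{prop:ulrich} is the precise error---that proposition concerns the special twists $\cU_W(w_i, -w_i)$, not $\cU_W$ itself, and ``$p_1|_{\Gamma_{\psi_t}}$ is an isomorphism'' proves nothing here, since the same is true of $p_1|_\Delta$ and yet $p_{1*}(\cU_W \otimes p_2^* \cO(d))$ is not $\cO(d)^{N_W}$ for general $d$. Degenerating $\psi_t$ to a non-invertible endomorphism does not obviously rescue the idea either: even when the codimension of the zero locus is preserved, there is no reason the resulting pushforward should be trivial.

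The paper's route is quite different and contains a step your proposal is missing. For $-w_{i+1} < d < -w_i$ the spectral sequence has two nonzero strands $A$ and $B$, producing a $\GL(V)$-equivariant extension
\[
0 \to \ker A \to \Phi_1^W(\cO(d)) \to \coker B \to 0.
\]
The key point is that this extension \emph{splits equivariantly}. To see this, the paper compares with the EFW complex for the length-$(n{+}2)$ degree sequence obtained by inserting $-d$ between $w_i$ and $w_{i+1}$: via Borel--Weil--Bott one identifies its strands with $A$ and $B$, and it exhibits $\bS_{\mu(W)}(V) \otimes \cO(d)$ as an extension of the same two pieces in the \emph{reversed} order. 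A separate lemma (the $\GL(V)$-invariant part of $\Ext^1$ between a quotient and a subbundle of $\bS_\lambda(V) \otimes \cO$ vanishes) then forces the original extension to split. The family $\cG$ is built by scaling the EFW extension class $\eta$: for $t \ne 0$ one gets $\bS_{\mu(W)}(V) \otimes \cO(d) \cong \cO(d)^{N_W}$, and at $t = 0$ the split sum $\ker A \oplus \coker B \cong \Phi_1^W(\cO(d))$.

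Your handling of the three equality cases is essentially correct and matches the paper's: in each of those cases one of the two strands is empty, so no splitting result is needed, and the remaining strand is identified with an EFW complex via Borel--Weil--Bott and the uniqueness of equivariant Pieri maps, exactly as you outline.
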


\begin{proof}
\addtocounter{equation}{-1}
\begin{subequations}
Define $\mu(W)$ as in Theorem~\ref{thm:main comparison}. Let $q\colon \PP^n\times \AA^1 \to\PP^n$ be the first projection. If $d = -w_i$ for some $i$, then by the argument in the proof of Corollary~\ref{cor:categorification} we have that $\Phi^W_1(\cO(d))\cong \bS_{\mu(W)}(V) \otimes \cO(d)$ as an equivariant module and we take $\mathcal G:=q^*(\bS_{\mu(W)}(V) \otimes \cO(d))$. 

If $d\notin \{-w_0, \dots, -w_n\}$ then there exists a unique $i$ such that $-w_i>d>-w_{i+1}$ (for notation, $-w_{-1} = \infty$ and $-w_{n+1} = -\infty$). The spectral sequence from Corollary~\ref{cor:categorification}(3) for computing $\Phi^W_1(\cO(d))$ consists of two (possibly empty) strands $A$ and $B$ where 
\begin{align*}
A_j &=
\begin{cases}
 \cO(-w_j)\otimes \rH^i(\PP^n, \cE_j(d)) & \text{ if } j\in [0,i]\\
 0 & \text{ else}
 \end{cases},\\
B_j &=
\begin{cases}
 \cO(-w_j)\otimes \rH^{i+1}(\PP^n, \cE_j(d)) & \text{ if } j\in [i+1, n]\\
 0 & \text{ else}
 \end{cases}.
\end{align*}
The $\rE^2$-page of this spectral sequence yields the $\GL(V)$-equivariant short exact sequence
\begin{equation}\label{eqn:PhiW1 extension}
0\to\ker A \to \Phi^W_1(\cO(d)) \to \coker B \to 0,
\end{equation}
which we will show splits. 

Let $\bF$ be the EFW resolution of type ${\bf e} = (w_0,w_1,\dots,w_i,d,w_{i+1},\dots,w_n)$.  We let $A'$ be the subcomplex of the sheafified complex $\widetilde{\bF}$ consisting of all terms in homological degrees $0,\dots,i$, and we let $B'$ be the quotient complex of $\widetilde{\bF}$ consisting of all terms in homological degrees $i+2,\dots,n$. 

Using \eqref{eqn:lambda i}, we see that $\lambda(\be)^{i+1} = \mu(W)$. Also, if $k \le i$, then 
\[
\lambda(\be)^k_j = \begin{cases} 
w_n - w_{j-1} - (n+1-j) & \text{if } j \le k\\
w_n - w_j - (n+1-j) & \text{if } k+1 \le j \le i\\
w_n - d - (n-i) & \text{if } j = i+1\\
w_n - w_{j-1} - (n+1-j) & \text{ if } j > i+1
\end{cases},
\]
and if $k \ge i+2$, then
\[
\lambda(\be)^k_j = \begin{cases} 
w_n - w_{j-1} - (n+1-j) & \text{if } j \le i+1\\
w_n - d - (n-i-1) & \text{if } j = i+2\\
w_n - w_{j-2} - (n+1-j) & \text{if } i+3 \le j \le k\\
w_n - w_{j} - (n+1-j) & \text{ if } j > k+1
\end{cases}.
\]

Recall that $\cE_j = \bS_{\lambda({\bf w})^j}(\cQ^*) \otimes \cO(w_n-n)$ where ${\bf w} = (w_0, \dots, w_n)$. It follows from Borel--Weil--Bott (Theorem~\ref{thm:bott}) that $A'_k \cong \cO(-w_k) \otimes \rH^i(\PP^n, \cE_k(d))$ for $k = 0,\dots,i$, and that $B'_k \cong \cO(-w_k) \otimes \rH^{i+1}(\PP^n, \cE_k(d))$ for $k=i+2, \dots, n$. Both complexes are $\GL(V)$-equivariant, and the differentials in the EFW complex are uniquely determined by the condition that they are $\GL(V)$-equivariant (a consequence of Pieri's rule, see for example \cite[\S 1.2]{sam-weyman}), so we deduce that $A'\cong A$ and $B'\cong B$.

In particular, this implies that we have $0\to B'\to \bS_{\mu(W)}(V) \otimes \cO(d) \to A'\to 0$, and since $B\cong B'$ and $A\cong A'$, there is an extension coming from $\widetilde{\bF}$:
\begin{equation}\label{eqn:EFW extension}
0\to \coker B \to \bS_{\mu(W)}(V) \otimes \cO(d)\to \ker A\to 0.
\end{equation}
Note that the positions of $\ker A$ and $\coker B$ are reversed in \eqref{eqn:PhiW1 extension} and \eqref{eqn:EFW extension}.  

Since \eqref{eqn:PhiW1 extension} and \eqref{eqn:EFW extension} are both equivariant extensions, we may apply Lemma~\ref{lem:ext-vanishing} to conclude that the sequence in \eqref{eqn:PhiW1 extension} splits. Let $\eta \in \Ext^1(\ker A, \coker B)$ be the class of the extension \eqref{eqn:EFW extension}. We let $\mathcal G$ be the family of bundles on $\PP^n\times \AA^1$ where over $t \in \AA^1$, the bundle $\mathcal G_t$ is the extension corresponding to $t \eta$.  This bundle satisfies the conditions of the lemma.

Finally, we note that if $d > -w_0$, then $A_\bullet = 0$ and $\ker A = 0$, so $\Phi^W_1(\cO(d)) \cong \coker B \cong \bS_{\mu(W)}(V) \otimes \cO(d)$. A similar thing happens when $d < -w_n$ with the roles of $A$ and $B$ reversed.  So in these cases, the bundle $\mathcal G$ is simply $\cO(d)^{N_W}$.
\end{subequations}
\end{proof}

\begin{proof}[Proof of Proposition~\ref{prop:semicont}]
For (1): we compute that
\begin{align*}
\rH^i(\PP^n,\Phi^W_2(\cF)(d)) &= \rH^i(\PP^n, p_{2*}(\mathcal U_W\otimes p_{1}^*\cF)\otimes \cO(d))&\text{(By definition)}\\\
&= \rH^i(\PP^n, p_{2*}(\mathcal U_W\otimes p_{1}^*\cF\otimes \cO(0,d)))&\text{(Projection formula)}\\\
&=\rH^i(\Delta_W,\mathcal U_W\otimes p_1^*\cF\otimes \cO(0,d))&\text{($p_2|_{\Delta_W}$ is affine)}\\
&=\rH^i(\PP^n,p_{1*}\left(\mathcal U_W(0,d)\otimes p_1^*\cF\right))&\text{($p_1|_{\Delta_W}$ is affine)}\\
&=\rH^i(\PP^n,p_{1*}(\mathcal U_W(0,d))\otimes \cF)&\text{(Projection formula)}\\
&=\rH^i(\PP^n, \Phi^W_1(\cO(d)) \otimes \cF). & \text{(By definition)}
\end{align*}
Let $\mathcal G$ be the vector bundle over $\PP^n \times \AA^1$ constructed in Lemma~\ref{lem: extension}. Let $q \colon \PP^n\times \AA^1 \to \PP^n$ be the projection map. Then $\mathcal G\otimes q^*\mathcal F$ is flat over $\AA^1$ with special fiber $\Phi^W_1(\cO(d)) \otimes \mathcal F$ and general fiber $\mathcal F(d)^{\oplus N_W}$ since $(\cG \otimes q^*\cF)_t \cong \cG_t \otimes \cF$. So semicontinuity and the above computation imply:
\[
\rh^i(\PP^n,\Phi^W_2(\cF)(d)) = \rh^i(\PP^n, \Phi^W_1(\cO(d)) \otimes \cF) \geq \rh^i(\PP^n, \cF(d)^{\oplus N_W}). 
\]
When $d \in \{-w_0, \dots, -w_n\}$, or $d > -w_0$, or $d < -w_n$, the sheaf $\cG$ is constant over $\AA^1$, so we get equality in the above formula.
\end{proof}

%%%%%%%%%%%%%%%%%%%%
%%%%%%%%%%%%%%%%%%%%
\section{Application to pure resolutions}
%%%%%%%%%%%%%%%%%%%%
%%%%%%%%%%%%%%%%%%%%

\begin{proof}[Proof of Corollary~\ref{cor:pure resolutions}]
Note that $\bE_W\otimes p_{2}^*\cF$ looks like:
\[
\cO(-w_0)\boxtimes \cE_0\otimes \cF \gets \dots \gets \cO(-w_n)\boxtimes \cE_n\otimes \cF\gets 0.
\]
The Castelnuovo--Mumford regularity of a supernatural sheaf with roots $-f_1\geq -f_2\geq \dots\geq  -f_n$ is $-f_1+1$.  Hence
\[
\reg \cE_i = \begin{cases}
-w_0+1 & \text{ if } i \ne 0\\
-w_1+1 & \text{ if } i = 0
\end{cases}.
\]
It then follows from~\cite[Prop.~1.8.9]{lazarsfeld} that
\[
\reg(\cE_i\otimes \cF)\leq \begin{cases}
(-w_0+1) + w_0 = 1 & \text{ if } i\ne 0\\
(-w_1+1)+w_0 < 1 & \text{ if } i =0
\end{cases}.
\]  
In particular, $\cE_i\otimes \cF$ has no higher cohomology for all $i$, and thus the only nonzero terms in the $\rE^1$-page of the spectral sequence for $\Phi^W_1$ appear in the $\rH^0$-row.  The $\rE_1$-page of that spectral sequence is thus a locally free resolution of $\Phi^W_1(\cF)$ which has the form
\[
\cO(-w_0)\otimes \rH^0(\PP^n, \cE_0\otimes \cF) \gets \dots \gets \cO(-w_n)\otimes \rH^0(\PP^n, \cE_n\otimes \cF)\gets 0,
\]
and hence is the desired pure resolution.
\end{proof}
Note that Example~\ref{ex:023 line} gives an example of Corollary~\ref{cor:pure resolutions}.

%%%%%%%%%%%%%%%%%%%%
%%%%%%%%%%%%%%%%%%%%
\section{Applications to Boij--S\"oderberg Decompositions}
%%%%%%%%%%%%%%%%%%%%
%%%%%%%%%%%%%%%%%%%%

In this section, we prove our main application to Boij--S\"oderberg decompositions.  

\begin{proof}[Proof of Theorem~\ref{thm:categorified special case}]
The spectral sequence $\widehat{\rE}^1_{p,q}$ for $\Phi^W_2(\cF)$ has terms $\rH^i(\PP^n, \cF(-w_i))\otimes \cE_i$ on the main diagonal, and all other terms are $0$. Next, note that in the Boij--S\"oderberg decomposition
\[
\gamma(\cF)= \sum_{i=0}^n a_i \gamma(\cE_i),
\]
the only contribution to the cohomology table in degree $-w_i$ comes from $\cE_i$ which has $\rh^i(\PP^n, \cE_i(-w_i))=N_W$ by Theorem~\ref{thm:main comparison}(2).  It follows that
\[
\rh^i(\PP^n, \cF(-w_i)) = a_i \cdot N_W.
\]

Hence, the spectral sequence $\widehat{\rE}^1_{p,q}$ has terms $\cE_i^{\oplus N_W\cdot a_i}$ along the main diagonal, and all other terms are $0$.  
It follows immediately that $\Phi^W_2(\cF)$ has a filtration of the desired form. 

For the claim about the cohomology table of $\Phi^W_2(\cF)$, we note that the given filtration of $\Phi^W_2(\cF)$ yields an upper bound of $\sum_{i=0}^n N_Wa_i\gamma(\cE_i)$ on the cohomology table of $\Phi_2^W(\cF)$; this is because cohomology tables are subadditive with respect to short exact sequences.  The lower bound is given by Proposition~\ref{prop:semicont}.
\end{proof}

\begin{example} \label{eg:BS-explain}
We now return to Example~\ref{eg:BS-cat}. Here $W=\{\cO,\cO(-2),\cO(-3)\}$ and we have
\[
\gamma(\cF) = \gamma(\cE_2) + \tfrac{1}{3}\gamma(\cE_1),
\]
where $\cE_2 = \cQ^*$ and $\cE_1 = (\Sym^2\cQ^*)(1)$.  Theorem~\ref{thm:categorified special case} proves that $\Phi^W_2(\cF)$ is an extension
\[
0\to \cE_1 \to \Phi^W_2(\cF) \to \cE_2^{\oplus 3} \to 0.
\]
However, the extension splits as claimed:
\begin{align*}
\Ext^1(\cE_2,\cE_1) = \rH^1(\cE_2^*\otimes \cE_1)=\rH^1((\bS_{2,1} \cQ^*)(2) \oplus (\Sym^3 \cQ^*)(2) )=0.
\end{align*}
In the second step we used that $\cQ \cong \cQ^*(1)$ and Pieri's rule for tensoring Schur functors, and in the third step we used Borel--Weil--Bott (Theorem~\ref{thm:bott}).
\end{example}

\begin{remark}
Note that while Corollary~\ref{cor:supernatural becomes equivariant} is a special case of Theorem~\ref{thm:categorified special case}, its proof does not require the use of Proposition~\ref{prop:semicont}.
\end{remark}

\begin{remark}\label{rmk:general}
It is natural to ask whether these techniques can be used to categorify the Boij--S\"oderberg decomposition of an arbitrary vector bundle.  For instance, one might hope for an iterative procedure in which a new $W$ can be introduced for each step of the Boij--S\"oderberg decomposition.  There are two challenges to this approach in general:  finding the correct hypotheses for when the cohomology table of $\Phi^W_2(\cF)$ is a scalar multiple of the cohomology table of $\cF$, and finding a map from $\Phi^W_2(\cF)$ to the appropriate supernatural bundle that is surjective on cohomology.  Though these both appear to be nontrivial problems, there is also room for optimism, as we know of no fundamental obstacles to such an approach working in general.
\end{remark}

\end{document}